\numberwithin{equation}{section}
\theoremstyle{plain}
\newtheorem{theo}{Theorem}[section]
\theoremstyle{plain}
\newtheorem{theorem}[theo]{Theorem}
\newtheorem{proposition}[theo]{Proposition}
\theoremstyle{definition}
\newtheorem{definition}[theo]{Definition}
\newcommand{\bC}{\mathbb{C}}
\newcommand{\bR}{\mathbb{R}}
\newcommand{\bS}{\mathbb{S}}
\newcommand{\so}{\mathfrak{so}}
\newcommand\SO{\mathrm{SO}}
\newcommand\SU{\mathrm{SU}}
\newcommand\Spin{\mathrm{Spin}}
\newcommand{\cF}{\mathcal{F}}
\newcommand{\cH}{\mathcal{H}}
\newcommand{\cL}{\mathcal{L}}
\newcommand{\cV}{\mathcal{V}}
\DeclareMathOperator\Tr{Tr}
\DeclareMathOperator\Ad{Ad}
\DeclareMathOperator\vol{vol}
\DeclareMathOperator\Id{Id}
\DeclareMathOperator\Ric{Ric}
\newcommand\Ker{\operatorname{Ker}}
\newcommand{\wt}{\widetilde}
\newcommand{\wh}{\widehat}
\begin{document}
\title{Sasakian manifolds and M-theory}
\author[Figueroa-O'Farrill, Santi]{José Figueroa-O'Farrill, Andrea Santi}
\address{Maxwell Institute and School of Mathematics, The University
  of Edinburgh, James Clerk Maxwell Building, Peter Guthrie Tait Road,
  Edinburgh EH9 3FD, Scotland, UK}
\email{j.m.figueroa@ed.ac.uk, asanti.math@gmail.com}
\thanks{EMPG-15-18}
\keywords{Sasakian manifolds, Calabi-Yau manifolds, M-theory}
\subjclass[2000]{53C25, 53C27, 83E50}
\begin{abstract}
  We extend the link between Einstein Sasakian manifolds and Killing
  spinors to a class of $\eta$-Einstein Sasakian manifolds, both in
  Riemannian and Lorentzian settings, characterising them in terms of
  generalised Killing spinors.  We propose a definition of
  supersymmetric M-theory backgrounds on such a geometry and find a new
  class of such backgrounds, extending previous work of Haupt, Lukas
  and Stelle.
\end{abstract}
\maketitle
\tableofcontents

\section{Introduction}
\label{sec:introduction}

Sasakian manifolds (see, e.g., \cite{MR2382957}) continue to play an
important rôle in mathematical physics, ever since the emergence,
almost two decades ago, of the conjectural gauge/gravity
correspondence \cite{Malda}. Klebanov and Witten \cite{KlebanovWitten}
(following from earlier work of Kehagias \cite{Kehagias}) conjectured
that the gravity dual of a certain 4-dimensional $N=1$ superconformal
field theory was given by type IIB superstring theory on the product of
5-dimensional anti-de~Sitter spacetime and a homogeneous
five-dimensional Sasaki--Einstein manifold called $T^{1,1}$.  They
interpreted this ten-dimensional Lorentzian manifold as the
near-horizon geometry of a stack of D3-branes sitting at the
singularity of the conifold.

This interpretation was further explored and extended in
\cite{JMFSUSY98,AFHS,MorrisonPlesser}, setting up a correspondence
between superconformal field theories with less than maximal
supersymmetry and near-horizon geometries of supersymmetric brane
configurations, where the branes are located at a conical singularity
in a Riemannian manifold of special holonomy. The near-horizon
geometry of such branes is then metrically a product of an
anti-de~Sitter spacetime with the link of the cone, which is an
Einstein manifold (or, more generally, an orbifold) admitting real
Killing spinors. In particular, conical singularities of Calabi--Yau 
manifolds have links which are Sasakian manifolds. Indeed, one of the
equivalent characterisations of a Sasakian manifold is one whose
metric cone is Kähler and if, in addition, the Sasakian manifold is
Einstein with positive scalar curvature, then the cone is Ricci-flat
and hence Calabi--Yau.  This is one instance of Bär's cone
construction \cite{Baer}, which states that the metric cone of an
Einstein manifold admitting real Killing spinors is either flat or
irreducible and admits parallel spinors.

Although the gauge/gravity correspondence exists between
superconformal field theories and string/M-theory, it is the 't~Hooft
(or large $N$) limit that has been studied the most, since that
limit corresponds to the supergravity limit of the string or M-theory.
It is believed that supersymmetric supergravity backgrounds which are
dual to the large $N$ limit of a superconformal field theory can be
corrected (in a way analogous to the $1/N$ corrections of the field
theory) to yield exact string/M-theory backgrounds, but in the case of
M-theory this is hindered by the lack of a good working definition of
the notion of a ``supersymmetric M-theory background''.

The supergravity limit of M-theory is eleven-dimensional supergravity,
the unique eleven-dimensional supergravity theory with 32 supercharges
predicted by Nahm \cite{Nahm} and constructed by Cremmer, Julia and
Scherk \cite{CJS}. The first-order corrections to the Maxwell
equations of eleven-dimensional supergravity were found in
\cite{Duff:1995wd} by Duff, Liu and Minasian. They are often called
the Green--Schwarz corrections and are needed by demanding the
cancellation of anomalies in the worldvolume theory of the fivebrane.
As we will review below, the Green--Schwarz term takes the form of a
correction to the Chern--Simons term by adding to $F \wedge F$ an
8-form made out of the first and second Pontryagin forms.

What is still unclear are the corrections to the spinor connection
defined from the supersymmetry variation of the gravitino.  This
connection encodes the geometry of the supersymmetric supergravity
backgrounds: not just they define the notion of Killing spinor, but
the bosonic field equations are equivalent to the vanishing of the
gamma-trace of its curvature \cite{GauPak}.  Due to the incomplete
knowledge of the corrections to this connection, we lack the notion of a
supersymmetric M-theory background (even to first order).

This motivates the search for eleven-dimensional Lorentzian geometries
which admit spinor fields which are parallel relative to connections
which are ``close'' (in a sense which is made precise in Section 4.2) to the
connection in eleven-dimensional supergravity.  In this paper we
explore Lorentzian Sasakian manifolds admitting such spinor fields and obtain
two main results.  

The first is a characterisation of Sasakian manifolds, both in
Riemannian and Lorentzian settings, whose transverse geometry is Ricci
flat as those admitting nonzero ``generalised Killing spinors'', a
notion which is given in Definition \ref{def:generdef}. This
generalises the characterisation of Sasaki--Einstein manifolds as
those admitting Killing spinors. The second result is showing that a
certain class of Lorentzian Sasakian spin manifolds which can be
exhibited as bundles over Calabi--Yau 5-folds admit generalised
Killing spinors and solve the first-order M-theory bosonic equations
with a nonzero flux $F$.

We remark that these bundles are in general nontrivial and that the
Riemannian metric of the Calabi-Yau 5-fold is essentially supported
over the maximally non-integrable distribution which is naturally
associated with the Sasakian manifold. This class of backgrounds is
therefore complementary to previous solutions found on the ``warped
compactifications'' of Calabi-Yau 5-folds by Haupt, Lukas and Stelle
in \cite{MR2511373} (see also \cite{LPSTholcorr}).

This paper is organised as follows. In Section~3 and after some
preliminaries on Sasakian geometry in Section 2, we define the notion
of a generalised Killing spinor and prove Theorem~\ref{thm:GKS}, which
characterises those Sasakian spin manifolds admitting generalised
Killing spinors. They turn out to be a special class of
$\eta$-Einstein Sasakian manifolds with Ricci-flat transverse
geometry. In Section~4 we apply this result to M-theory. We show that
the class of manifolds in Section~3 almost (but not quite) provide
supersymmetric backgrounds of eleven-dimensional supergravity, but
subject to an additional condition on the transverse geometry, they do
provide backgrounds satisfying the corrected M-theory equations and
admitting generalised Killing spinors. This is the content of
Theorem~\ref{thm:lastthm}. We end the paper with some comments on the
existence of the relevant Lorentzian Sasakian spin manifolds.

\subsubsection*{Notation}

Throughout the paper, we consider Clifford algebras as defined, for
instance, in \cite{LM}.  According to this, the Clifford product of
vectors of the standard basis of $\bR^{p,q}$ or $\bC^{p,q}$ is
$e_i\cdot e_j+e_j\cdot e_i=-2 g_{ij}$ and not ``$+2 g_{ij}$''.

\section{Preliminaries on Sasakian manifolds}
\label{sec:preliminaries}

Let $M$ be a smooth manifold of dimension $2n+1$ that is endowed with
a metric $g$ either positive-definite (we set $\epsilon=1$ in this
case) or Lorentzian (namely with signature $(2n,1)$ and we set
$\epsilon=-1$).

\begin{definition}(\cite{Bohle,BaumLeitner})
The pair $(M,g)$ is called a \emph{Sasakian manifold} if there exists
a vector field $\xi$ such that
\begin{enumerate}[label=(\roman*)]
\item $\xi$ is a Killing vector field of constant length
  $\epsilon=g(\xi,\xi)$, and
\item the endomorphism $\Phi=-\nabla\xi:TM\to TM$ satisfies the
  following two conditions for all vectors $X,Y \in TM$:
  \begin{equation*}
    \Phi^2(X) = -X+\epsilon g(\xi,X)\xi
    \end{equation*}
    and
    \begin{equation*}
      (\nabla_X\Phi)(Y) = \epsilon g(X,Y)\xi-\epsilon g(\xi,Y)X~.
  \end{equation*}
\end{enumerate}
\end{definition}

The Killing vector field $\xi$ is called the \emph{characteristic
  vector field} and it gives rise to a subbundle $\cV$ on $M$,
$\cV|_x=\bR\xi|_x$, called the \emph{vertical subbundle}.  The
collection of its $1$-dimensional leaves defines a foliation $\cF_\xi$
on $M$ which we always assume to be regular, i.e., each point of $M$
has a neighbourhood where each leaf passes at most one time.

Every Sasakian manifold comes with the \emph{characteristic $1$-form}
$\eta\in\Lambda^1(M)$ defined by $\eta(X)=\epsilon g(\xi,X)$ for all
$X\in TM$. It is a contact $1$-form and the associated maximally
nonintegrable distribution $\cH=\Ker\eta$ is called the
\emph{horizontal subbundle}.

The tangent bundle of $M$ decomposes into the $g$-orthogonal direct
sum $T M=\cH\oplus\cV$ of the horizontal and vertical subbundles and
$g=g|_{\cH}+\epsilon\eta\otimes\eta$.  We call the associated sections
\emph{horizontal} and, respectively, \emph{vertical} vector fields on
$M$.  Some basic properties of Sasakian manifolds are
\begin{gather*}
  \Phi(\xi)=0~,\qquad\eta(\Phi(X))=0~,\\
  g(\Phi(X),\Phi(Y))=g(X,Y)-\epsilon \eta(X)\eta(Y)~,\\
  d\eta(X,Y)=-2\epsilon g(\Phi(X),Y)~,\\
  \imath_{\xi}d\eta=0~,\qquad \imath_{\xi}\eta=1~.
\end{gather*}
The last equation says that $\xi$ is the Reeb vector field associated
with $\eta$.

One important feature of a (regular) Sasakian manifold is the fact
that the restriction $\epsilon\cdot\Phi\big|_{\cH}$ of $\epsilon\cdot\Phi$ to
$\cH$ is an integrable complex structure which naturally turns the
transverse geometry of the characteristic foliation into a Kähler
manifold.
\begin{theorem}\label{thm:bundle00}
  Let $(M,g,\xi,\Phi,\eta)$ be a $2n + 1$ dimensional compact Sasakian
  manifold. If $B$ is the leaf space of $\cF_\xi$ with the natural
  projection $\pi:M\to B$ then:
  \begin{enumerate}[label=(\roman*)]
  \item $B$ is a compact complex manifold with a Kähler metric $h$ and
    integral Kähler form $\omega$ satisfying $d\eta=-2\pi^*\omega$, and
  \item $\pi:(M,g)\to (B,h)$ is a pseudo-Riemannian submersion with
    totally geodesic fibers all diffeomorphic to $S^1$ and it has the
    natural structure of a principal circle bundle.
\end{enumerate}
\end{theorem}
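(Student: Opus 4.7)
The plan is to first turn the characteristic foliation into a principal $S^1$-bundle, then descend the transverse geometry to the base, and finally verify the Kähler and integrality conditions. I would start with the bundle structure. Since $\xi$ has constant nonzero length, it is nowhere vanishing and its flow is a complete group of isometries of $(M,g)$; together with the regularity of $\cF_\xi$ and compactness of $M$, every orbit is closed and diffeomorphic to $S^1$. After rescaling $\xi$ to have period one, the flow defines a free $S^1$-action, and standard slice arguments endow $B = M/S^1$ with a unique smooth structure making $\pi : M \to B$ a principal circle bundle with $B$ compact. The fibers are $1$-dimensional and totally geodesic because $\nabla_\xi\xi = -\Phi(\xi) = 0$.

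Next I would descend the transverse structures. A short computation using $\cL_\xi Y = \nabla_\xi Y + \Phi Y$ reduces $\cL_\xi \Phi$ to $\nabla_\xi \Phi$, which vanishes by the second structure identity evaluated at $X = \xi$. Together with $\cL_\xi g = 0$ and $\cL_\xi \eta = 0$, this shows that $\cH$, $g|_\cH$, and $\Phi|_\cH$ are basic with respect to the foliation. Via the horizontal lift $X \mapsto X^*$, I would then define
\begin{equation*}
  h(X,Y) := g(X^*, Y^*), \qquad JX := \pi_{*}(\epsilon\,\Phi X^*)
\end{equation*}
on $B$. The identity $g(\Phi X,\Phi Y) = g(X,Y) - \epsilon\eta(X)\eta(Y)$ yields $J$-invariance of $h$, and by construction $\pi : (M,g) \to (B,h)$ is a pseudo-Riemannian submersion.

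For the Kähler structure, set $\omega(X,Y) := h(JX,Y)$. Using $d\eta(X,Y) = -2\epsilon g(\Phi X, Y)$ together with $\imath_{\xi} d\eta = 0$, one checks $d\eta = -2\pi^*\omega$, whence $d\omega = 0$ is immediate from $d^2\eta = 0$ and the injectivity of $\pi^*$ on basic forms. Integrability of $J$ is precisely the assertion, quoted just before the theorem, that $\epsilon\Phi|_{\cH}$ is an integrable transverse complex structure; thus $(B, J, h)$ is Kähler. Integrality of $[\omega]$ then follows from Chern--Weil theory applied to the principal circle bundle $M \to B$ with connection 1-form $\eta$: its curvature is the basic form $-2\pi^*\omega$, so $[\omega]$ is proportional to the first Chern class of the bundle and, in particular, integral.

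The most delicate point is the first step: extracting a free $S^1$-action and a genuine principal bundle structure from the regularity of $\cF_\xi$ on a compact $M$, and checking that the quotient $B$ is Hausdorff and smooth. Once that is in hand, everything else is essentially formal bookkeeping with the Sasakian structure equations together with the standard correspondence between basic forms on the total space of a submersion and forms on the base.
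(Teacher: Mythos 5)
Your argument is correct in outline, but it takes a genuinely different route from the paper. The paper gives essentially no direct proof: it cites the Riemannian version of this structure theorem (Boyer--Galicki) and then disposes of the Lorentzian case by the observation, recorded in the proposition immediately following the theorem, that Sasakian structures with $\epsilon=1$ and $\epsilon=-1$ are in bijection via $\overline g = g - 2\epsilon\,\eta\otimes\eta$, $\overline\Phi=-\Phi$; since this metric change fixes $\cH$, $g|_{\cH}$ and the leaf space, the Lorentzian statement is inherited from the Riemannian one. You instead run the classical Boothby--Wang construction directly and uniformly in both signatures, which works because $g|_{\cH}$ is positive-definite either way; this buys a self-contained proof at the cost of re-doing the hard analytic step (closedness of the orbits, constancy of the period so that the flow really is an $S^1$-action, and smoothness of the Hausdorff quotient -- you correctly identify this as the delicate point, and it is exactly what the cited Riemannian theorem packages up). Two small points to tighten: the constancy of the period of the Reeb orbits on a compact regular contact manifold deserves a word (it is where $\cL_\xi\eta=0$ and Wadsley-type arguments enter), and ``$[\omega]$ is proportional to the first Chern class, hence integral'' is not a valid inference as stated -- you need the specific normalisation (the paper takes the circle action to have period $2$, so that $d\eta=-2\pi^*\omega$ exhibits $[\omega]$ as exactly the Euler class) to conclude integrality rather than integrality up to a scale.
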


This result is usually stated in the Riemannian setting (see
\cite{MR1792957}). However it also holds in the Lorentzian case, as a
simple consequence of the fact that Sasakian
structures with $\epsilon=1$ and $\epsilon=-1$ are in a one-to-one
correspondence with each other.

\begin{proposition}
  Let $(M,g,\xi,\Phi,\eta)$ be a Sasakian manifold with $\epsilon=\pm
  1$. Then $(M,\overline g,\xi,\overline\Phi,\eta)$ is a Sasakian
  manifold with $\overline\epsilon=-\epsilon$, where $\overline
  g=g-2\epsilon\eta\otimes\eta$ and $\overline\Phi=-\Phi$.  Moreover
  the Levi-Civita connection of $g$ and $\overline g$ are related as
  $\overline\nabla _X Y=\nabla_X Y+2\eta(Y)\Phi(X)+2\eta(X)\Phi(Y)$.
\end{proposition}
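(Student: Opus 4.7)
The strategy is to exploit the fact that the map $g\mapsto\overline g=g-2\epsilon\eta\otimes\eta$ is an involution on the set of bilinear forms (since $\overline\epsilon=-\epsilon$ and $\overline{\overline g}=g-2\epsilon\eta\otimes\eta+2\epsilon\eta\otimes\eta=g$). It therefore suffices to establish the forward direction. The heart of the proof is the transformation rule for the Levi-Civita connection; once this is in hand, all Sasakian axioms for $(\overline g,\xi,\overline\Phi,\eta)$ follow by direct substitution from the corresponding axioms for $(g,\xi,\Phi,\eta)$.

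I would first verify the formula for $\overline\nabla$ by checking that the proposed expression $\overline\nabla_X Y:=\nabla_X Y+2\eta(Y)\Phi(X)+2\eta(X)\Phi(Y)$ is torsion-free and preserves $\overline g$, which pins it down uniquely. Torsion-freeness is immediate because the correction term $C(X,Y):=2\eta(Y)\Phi(X)+2\eta(X)\Phi(Y)$ is manifestly symmetric in $X,Y$. For metric compatibility, I would expand both sides of $X(\overline g(Y,Z))=\overline g(\overline\nabla_X Y,Z)+\overline g(Y,\overline\nabla_X Z)$ using $\overline g=g-2\epsilon\eta\otimes\eta$ and the derivative identity $(\nabla_X\eta)(Y)=-\epsilon g(\Phi X,Y)$, together with the antisymmetry $g(\Phi X,Y)=-g(X,\Phi Y)$ and $\eta\circ\Phi=0$. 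After these substitutions the two sides coincide, so the formula is correct.

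With the connection formula at hand, I would check the axioms in order. First, $\overline g(\xi,\xi)=g(\xi,\xi)-2\epsilon=\epsilon-2\epsilon=-\epsilon$, giving $\overline\epsilon=-\epsilon$. That $\xi$ remains Killing for $\overline g$ follows from $\mathcal{L}_\xi g=0$ and $\mathcal{L}_\xi\eta=d(\iota_\xi\eta)+\iota_\xi d\eta=0$. Plugging $Y=\xi$ into the connection formula yields $\overline\nabla_X\xi=\nabla_X\xi+2\Phi(X)+0=-\Phi(X)+2\Phi(X)=\Phi(X)$, hence $\overline\Phi=-\overline\nabla\xi=-\Phi$, consistent with the definition. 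Since $\eta(X)=\epsilon g(\xi,X)$, one computes $\overline g(\xi,X)=-g(\xi,X)$, so $\overline\epsilon\,\overline g(\xi,X)=\epsilon g(\xi,X)$, which is exactly the coefficient appearing in $\Phi^2(X)=-X+\epsilon g(\xi,X)\xi=\overline\Phi^2(X)$; so the projector identity transfers verbatim.

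The only moderately delicate step, and the one I expect to be the main obstacle, is the verification of $(\overline\nabla_X\overline\Phi)(Y)=\overline\epsilon\,\overline g(X,Y)\xi-\overline\epsilon\,\overline g(\xi,Y)X$. Expanding $\overline\nabla_X(\overline\Phi Y)$ and $\overline\Phi(\overline\nabla_X Y)$ using the connection formula produces several $\Phi^2$ contributions of the form $\eta(X)\Phi^2(Y)$ and $\eta(Y)\Phi^2(X)$. I would apply $\Phi^2=-\Id+\epsilon g(\xi,\cdot)\xi$ to each, collect the resulting $X$-, $Y$-, and $\xi$-components, and use $(\nabla_X\Phi)(Y)=\epsilon g(X,Y)\xi-\epsilon g(\xi,Y)X$ for the $\nabla\Phi$ piece. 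The $\xi$-components, after substituting $\eta=\epsilon g(\xi,\cdot)$, reconstruct the expression $-\epsilon\overline g(X,Y)\xi$ because $\overline g(X,Y)=g(X,Y)-2\epsilon g(\xi,X)g(\xi,Y)$, while the $X$-component combines with $\epsilon g(\xi,Y)X$ to yield $\epsilon\overline g(\xi,Y)X=-\epsilon g(\xi,Y)X$. This matches $\overline\epsilon\,\overline g(X,Y)\xi-\overline\epsilon\,\overline g(\xi,Y)X$, completing the proof.
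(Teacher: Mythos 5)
Your proof is correct, and it follows the only natural route: verify the displayed connection is torsion-free and $\overline g$-compatible (hence is $\overline\nabla$ by uniqueness), then push the Sasakian axioms through it; I checked the delicate $(\overline\nabla_X\overline\Phi)(Y)$ computation, which reduces to $-(\nabla_X\Phi)(Y)+2\eta(Y)\Phi^2(X)$ and does give $\overline\epsilon\,\overline g(X,Y)\xi-\overline\epsilon\,\overline g(\xi,Y)X$. The paper states this proposition without any proof, so there is nothing to compare against; the only detail you leave implicit is that $\overline g=g|_{\cH}-\epsilon\,\eta\otimes\eta$ is nondegenerate of the opposite signature type, which is immediate from the orthogonal splitting $TM=\cH\oplus\cV$.
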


There is an inverse construction to Theorem~\ref{thm:bundle00}, which we
briefly recall.  The \emph{contactification} of a symplectic manifold
$(B,\omega)$ is a contact manifold of one dimension higher, introduced
as the total space $M$ of an appropriate bundle $\pi:M\rightarrow B$
endowed with a contact structure $\cH=\Ker \eta$
(see, e.g., \cite{MR1866631,MR0112160}).

If $\omega=d\alpha$ is exact one can simply consider the trivial
bundle $M=B\times\bR$ and the $1$-form $\eta=dt-2\alpha$, where $t$ is
the coordinate on $\bR$.  If $\omega$ represents an integral
cohomology class, Boothby and Wang first considered a principal circle
bundle $\pi:M\rightarrow B$ of Euler class $[\omega]$ and then a
connection $1$-form $A$ with curvature $dA=-2\pi i \pi^*\omega$; the
$1$-form $\eta=-\frac{i}{\pi} A$ is the required contact form on $M$. For
more details on this construction we refer to \cite{MR2397738}.

If $B$ is in addition Kähler with respect to a complex structure $J$ then
$D J=D \omega=0$ where $D$ is the Levi-Civita connection of the
corresponding Hermitian metric $h(\cdot,\cdot)=\omega(\cdot,J\cdot)$
and the contactification $(M,\eta)$ has a natural Sasakian structure
with horizontal subbundle $\cH=\Ker \eta$ and the following tensor
fields:
\begin{enumerate}[label=(\roman*)]
\item $g=\pi^* h+\epsilon\eta\otimes\eta$,
\item $\xi$ is the fundamental field of the action of $S^1$ on $M$ (of
  period $2$), and
\item $\Phi$ is the $(1,1)$-tensor uniquely determined by
  $\Phi\big|_{\cH}=\epsilon\pi^* J$, $\Phi(\xi)=0$.
\end{enumerate}
Any Sasakian structure obtained in this way is called \emph{strongly regular}.

We conclude this section with a basic result on the curvature tensors
of a strongly regular Sasakian manifold.

We first fix some notation: we denote by $\wh U\in\mathfrak{X}(M)$ the
basic lift of a vector field $U\in\mathfrak{X}(B)$ on the base of
$\pi:M\to B$, this is the unique horizontal vector field satisfying
\begin{equation}
\label{eq:blift}
\pi_*\wh U=U\qquad\text{and}\qquad\cL_{\xi}\wh U=0~.
\end{equation} 
The bracket of two basic lifts is
\begin{equation}
\label{eq:brahor}
[\wh U,\wh V]=\wh{[U,V]}+2\omega(U,V)\xi~.
\end{equation}

\begin{proposition}\label{pr:lemma1}
  Let $(M,g,\xi,\Phi,\eta)$ be a strongly regular Sasakian manifold
  with $\epsilon=\pm 1$ and Kähler transverse geometry
  $(B,h,J,\omega)$. Then
  \begin{enumerate}[label=(\roman*)]
  \item the Levi-Civita connection $\nabla$ of $g$ is the unique
    linear connection which satisfies
    \begin{equation}\label{eq:wtLC}
      \begin{aligned}[m]
        \nabla_{\xi}{\xi}&=0 \\
        \nabla_{\wh U}{\xi}&=\nabla_{\xi}\wh U =-\epsilon \wh{JU}\\
        \nabla_{\wh U}\wh V &= \wh{D_{U}V}+\omega(U,V)\xi~;
      \end{aligned}
    \end{equation}
  \item the curvature tensor $R$ of $g$ is the unique
    $(1,3)$-tensor field which satisfies
    \begin{equation}\label{eq:wtcurv}
      \begin{aligned}[m]
        R(\xi,\wh V)\xi &=-\wh V \\
        R(\xi,\wh V)\wh W &= \epsilon h(V,W)\xi\\
        R(\wh U,\wh V)\xi&=0\\
        R(\wh U,\wh V)\wh W &=
        (R_{h}(U,V)W)\widehat{\phantom{x}}+2\epsilon\omega(U,V)
        \widehat{JW}\\
        &\qquad -\epsilon(\omega(V,W) \wh{JU}-\omega(U,W) \widehat{JV})~, 
      \end{aligned}
    \end{equation}
    where $R_h$ is the curvature tensor of $h$;
  \item the Ricci curvature $\Ric$ of $g$ is the unique symmetric
    $(0,2)$-tensor field which satisfies
    \begin{equation}\label{eq:wtricci}
      \begin{aligned}[m]
        \Ric(\xi,\xi)&=2n\\
        \Ric(\xi,\wh U)&=0\\
        \Ric(\wh U,\wh U)&=\Ric_{h}(U,U)-2\epsilon h(U,U)~,
      \end{aligned}
    \end{equation}
    where $\Ric_h$ is the Ricci curvature of $h$.
  \end{enumerate}
\end{proposition}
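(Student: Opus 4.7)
The plan is to verify each of the three parts by direct computation on a local frame of $TM$ formed by the Reeb field $\xi$ together with basic lifts $\wh U,\wh V,\wh W,\ldots$ obtained from a local frame on the Kähler base $B$. Since $TM=\cH\oplus\cV$ pointwise and each formula is $C^\infty(M)$-linear in the appropriate slots, it suffices to check the identities on such arguments; uniqueness is then automatic because basic lifts span $\cH$ everywhere.

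For (i), I would use three inputs: the defining identity $\Phi=-\nabla\xi$, which gives $\nabla_{\wh U}\xi=-\Phi(\wh U)=-\epsilon\,\widehat{JU}$; the fact that $\xi$ is Killing of constant length $\epsilon$, which yields $\nabla_\xi\xi=0$; and the bracket relations $[\xi,\wh U]=0$ and $[\wh U,\wh V]=\wh{[U,V]}+2\omega(U,V)\xi$ of \eqref{eq:blift}--\eqref{eq:brahor}. The first two lines of \eqref{eq:wtLC} then follow, with $\nabla_\xi\wh U=\nabla_{\wh U}\xi$ by torsion-freeness. For the last line I split $\nabla_{\wh U}\wh V$ into horizontal and vertical parts. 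The vertical part is fixed by
\begin{equation*}
    g(\nabla_{\wh U}\wh V,\xi) = -g(\wh V,\nabla_{\wh U}\xi) = \epsilon\,h(V,JU) = \epsilon\,\omega(U,V),
\end{equation*}
using $J^2=-\Id$ and $h(\cdot,\cdot)=\omega(\cdot,J\cdot)$; dividing by $g(\xi,\xi)=\epsilon$ yields the $\omega(U,V)\xi$ term. The horizontal part equals $\wh{D_U V}$ by applying the Koszul formula to basic lifts, since $g|_\cH=\pi^* h$ and the bracket of two basic lifts has horizontal part $\wh{[U,V]}$.

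For (ii), I would substitute (i) into $R(X,Y)Z=\nabla_X\nabla_Y Z-\nabla_Y\nabla_X Z-\nabla_{[X,Y]}Z$ for the four combinations of arguments. The cases involving $\xi$ fall out quickly from iterating $\nabla\xi=-\epsilon\,\widehat{J\,\cdot\,}$ together with the Kähler identity $DJ=0$: $R(\xi,\wh V)\xi=-\wh V$ reduces to $J^2=-\Id$, and the vanishing of $R(\wh U,\wh V)\xi$ relies on $(D_U J)V=(D_V J)U$ together with the symmetry $\omega(V,JU)=h(U,V)=\omega(U,JV)$ in $U,V$. The main obstacle will be $R(\wh U,\wh V)\wh W$: expanding $\nabla_{\wh U}(\wh{D_V W}+\omega(V,W)\xi)$, antisymmetrising in $U,V$, and subtracting $\nabla_{[\wh U,\wh V]}\wh W$ produces three families of terms — horizontal pieces that assemble into $\wh{R_h(U,V)W}$; ``O'Neill-type'' contributions $\epsilon\omega(\cdot,\cdot)\widehat{J\cdot}$ coming from $\nabla_{\wh U}(\omega(V,W)\xi)=\omega(V,W)\nabla_{\wh U}\xi$; and the contribution $2\omega(U,V)\nabla_\xi\wh W=-2\epsilon\omega(U,V)\widehat{JW}$ from the vertical part of $[\wh U,\wh V]$. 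Using $DJ=0$ once more and the conversions between $\omega$, $h$ and $J$, these combine into the claimed expression. This is essentially bookkeeping, but it is by far the most involved step.

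Finally, (iii) is obtained by tracing (ii). Choosing a local orthonormal $h$-frame $\{e_i\}_{i=1}^{2n}$ on $TB$ and lifting to $\{\wh e_i\}$ produces a (pseudo-)orthonormal frame $\{\wh e_1,\ldots,\wh e_{2n},\xi\}$ of $TM$. The first two lines of \eqref{eq:wtricci} are then immediate from the first three identities in \eqref{eq:wtcurv}. For $\Ric(\wh U,\wh U)$ the vertical direction contributes $\epsilon\,h(U,U)$ via $R(\xi,\wh U)\wh U=\epsilon h(U,U)\xi$, while the horizontal trace produces $\Ric_h(U,U)$ from the $R_h$-terms together with a correction $-3\epsilon\,h(U,U)$ from the $\omega$-terms, using $\sum_i\omega(e_i,U)\,h(JU,e_i)=-h(U,U)$. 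Summing yields $\Ric_h(U,U)-2\epsilon\,h(U,U)$, as claimed.
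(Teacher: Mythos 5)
Your proposal is correct and follows essentially the same route the paper indicates: direct computation of (i) and (ii) from the basic-lift relations \eqref{eq:blift}--\eqref{eq:brahor} and the Kähler condition $DJ=D\omega=0$, and then tracing in an adapted frame $(\wh e_i,\xi)$ for (iii); the paper omits these details, and your computations (including the signs in the vertical component $\omega(U,V)\xi$, the cancellation of the $\xi$-component of $R(\wh U,\wh V)\wh W$, and the $-3\epsilon h(U,U)+\epsilon h(U,U)$ bookkeeping in the Ricci trace) all check out.
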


Points (i) and (ii) are proved by direct computations which use
\eqref{eq:blift}, \eqref{eq:brahor} and the fact that $(B,h,\omega,J)$
is Kähler. To prove (iii) it is convenient to consider local
\emph{adapted frames}, namely oriented $g$-orthonormal frames on $M$
of the form $(\wh e_i,\xi)$ for some $h$-orthonormal frame
$(e_i)_{i=1}^{2n}$ on $B$ with $e_{i+n}=Je_i$ for all $1\leq i\leq n$.
We omit the details for the sake of brevity.

From now on we will tacitly restrict to strongly regular Sasakian
manifolds, but recall that every regular and compact Sasakian manifold
is automatically strongly regular by Theorem~\ref{thm:bundle00}.

\section{Null Sasakian geometry and generalised Killing spinors}

The main aim of this section is to prove Theorem~\ref{thm:GKS}, a
spinorial characterisation of Sasakian structures with Ricci flat
transverse geometry. We remark that by Proposition~\ref{pr:lemma1} such
structures are never Einstein but rather $\eta$-Einstein (see, e.g.,
\cite{MR2200887}) since they satisfy
$ \Ric=\lambda g+\nu\eta\otimes\eta $ with $\lambda=-2\epsilon$ and
$\nu=2(n+1)$.  In particular, Bär's cone construction \cite{Baer},
relating the existence of real Killing spinors to special holonomy
cones, does not apply in our case.  Instead, the relevant definition
is the following.

\begin{definition}\label{def:generdef}
  Let $(M,g,\xi,\Phi,\eta)$ be a $2n+1$-dimensional Sasakian spin
  manifold with $\epsilon=\pm 1$. A \emph{generalised Killing spinor}
  is a non-zero section $\varphi$ of the associated Dirac spinor
  bundle $p:\bS(M)\to M$ which satisfies:
  \begin{enumerate}[label=(\alph*)]
  \item $\nabla_{X}\varphi =
    \tfrac{1}{2}\epsilon\Phi(X)\cdot\xi\cdot\varphi$ for all
    horizontal vectors $X$;
  \item $\nabla_{\xi}\varphi=-\Phi\cdot\varphi$ where $\Phi$ is
    understood as an element of $\so(TM)$.
  \end{enumerate}
\end{definition}

Note that any generalised Killing spinor is nowhere vanishing since it
is non-zero by definition and parallel with respect to a connection on the
spinor bundle.  Our main result is the following

\begin{theorem}\label{thm:GKS}
  Let $(M,g,\xi,\Phi,\eta)$ be a $2n+1$-dimensional Sasakian manifold
  with $\epsilon=\pm 1$ and transverse Kähler geometry
  $(B,h,J,\omega)$. Then $M$ admits a spin structure if and only if
  $B$ admits a spin structure and
  \begin{enumerate}[label=(\roman*)]
  \item if $M$ has a spin structure carrying a generalised Killing spinor
    then $\Ric_h=0$;
  \item conversely if $B$ has a spin structure carrying a parallel
    spinor then $\Ric_h=0$ and $M$ has a generalised Killing spinor.
  \end{enumerate}
  Moreover if $B$ has full holonomy $\SU(n)$ or it is the
  standard complex torus then there are two parallel spinors such that
  the corresponding generalised Killing spinors $\varphi_\pm$ on $M$
  satisfy $\Phi\cdot\varphi_\pm=\pm\epsilon i\frac{n}{2}\varphi_\pm$.
\end{theorem}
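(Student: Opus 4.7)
The unifying strategy is to exploit the principal circle bundle $\pi: M \to B$ of Theorem~\ref{thm:bundle00} to relate spinor data on $M$ to spinor data on the Kähler base. For the equivalence of spin structures, the splitting $TM \cong \pi^* TB \oplus \bR\xi$ with trivial vertical factor gives $w_2(M) = \pi^* w_2(B)$, and the Gysin sequence of the $S^1$-bundle $\pi$ controls $\pi^*$ on $H^2(-;\bZ/2)$; combined with the block-diagonal embedding $\Spin(2n) \hookrightarrow \Spin(2n+1)$ acting on a pulled-back frame together with $\xi$, this yields a canonical way to lift spin structures from $B$ to $M$ and to restrict them back.

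For direction (ii), I would first invoke the classical fact that a Kähler manifold admitting a parallel spinor has holonomy in $\SU(n)$ and hence is Ricci flat; this gives $\Ric_h = 0$ at once. A parallel spinor $\psi$ on $B$ then lifts to a basic section $\varphi$ of $\bS(M)$, and the generalised Killing equations are verified by computing the spin connection of $g$ from Proposition~\ref{pr:lemma1}(i). The horizontal part $\nabla_{\wh U}\varphi$ splits as the pulled-back spin connection $D_U\psi$ (which vanishes) plus a ``mixing'' contribution from the term $\omega(U,V)\xi$ in $\nabla_{\wh U}\wh V$; a short computation using $\sum_a \omega(U,e_a)\wh{e_a} = \wh{JU} = \epsilon\,\Phi(\wh U)$ reduces this to the right-hand side of (a). The vertical part $\nabla_\xi$ acts on basic spinors as the spin lift of the skew endomorphism of $\cH$ dictated by $\nabla_\xi\wh U=-\epsilon\wh{JU}$, which is precisely $-\Phi\in\so(TM)$, giving (b).

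For the converse (i), the strategy is to extract the constraint on $\Ric_h$ from the integrability condition of (a) and (b). Writing $\nabla_X\varphi=\Omega(X)\cdot\varphi$ with $\Omega$ Clifford-valued, the spinor Ricci identity gives
\[
\sum_a e_a\cdot R(e_a,X)\cdot\varphi \;=\; c\,\Ric(X)\cdot\varphi
\]
for a fixed nonzero constant $c$, while the left-hand side is determined by $d^\nabla\Omega+\tfrac{1}{2}[\Omega,\Omega]$ and thus entirely by (a) and (b). Evaluating both sides on a horizontal $X=\wh U$ in an adapted frame and using Proposition~\ref{pr:lemma1}(iii) to separate the $-2\epsilon\, h(U,\cdot)$ contribution from $\Ric$, the residual equation asserts that $\pi^*\Ric_h(U)$ acts on $\varphi$ by Clifford multiplication to zero. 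Since $\cH$ is spacelike (in both the Riemannian and Lorentzian case), Clifford multiplication by a nonzero horizontal vector is injective on spinors and $\varphi$ is nowhere vanishing, forcing $\Ric_h = 0$. I expect this to be the main obstacle, since it requires careful bookkeeping of the Clifford products arising from the elements $\Phi(X)\cdot\xi$ and from the commutators $[\Omega(X),\Omega(Y)]$, and is most transparently carried out in an adapted frame.

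Finally, when $B$ has full $\SU(n)$ holonomy or is the standard flat torus, the space of parallel spinors is two-dimensional, spanned by the extreme weight vectors $\psi_\pm$ for the Kähler-form action in the Clifford representation; standard Calabi--Yau spinor calculus gives $\omega_B\cdot\psi_\pm = \pm i(n/2)\,\psi_\pm$. Since $\Phi|_{\cH}=\epsilon\,\pi^* J$ and $\Phi(\xi)=0$, the spin lift of $\Phi\in\so(TM)$ acting on basic spinors coincides with $\epsilon$ times the spin lift of $J\in\so(TB)$ acting on $\psi_\pm$, whence $\Phi\cdot\varphi_\pm=\pm\epsilon\, i(n/2)\,\varphi_\pm$.
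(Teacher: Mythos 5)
Your treatment of parts (i), (ii) and the eigenvalue claim follows essentially the same route as the paper: (i) is obtained by computing the Clifford ($\Gamma$-)trace of the curvature on a generalised Killing spinor and comparing with $-\tfrac12 r(X)\cdot\varphi$ together with Proposition~\ref{pr:lemma1}(iii), then using that $g|_{\cH}$ is positive definite so that Clifford multiplication by a nonzero horizontal vector is injective; (ii) lifts a parallel spinor to a basic section and reads off the extra terms $\tfrac12\epsilon\Phi(\wh U)\cdot\xi$ and $-\Phi$ from the decomposition of the Levi-Civita connection of the submersion; and the last claim is the computation of the spin lift of $\Phi$ on $\Lambda^0U^*$ and $\Lambda^nU^*$. (One notational slip: Clifford multiplication by the Kähler $2$-form has eigenvalues $\pm in$ on $\Lambda^0U^*$ and $\Lambda^nU^*$; it is the spin lift of $J\in\so(2n)$, i.e.\ $\tfrac12\sum_i e_i\cdot e_{i+n}$, that has eigenvalues $\pm i\tfrac n2$. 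Your final formula is the correct one.)

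The one place where you genuinely depart from the paper is the equivalence of spin structures, and there your argument does not close. From $TM\cong\pi^*TB\oplus\bR\xi$ you correctly get $w_2(M)=\pi^*w_2(B)$, but the Gysin sequence of the circle bundle with Euler class $[\omega]$ shows that the kernel of $\pi^*:H^2(B;\mathbb{Z}/2)\to H^2(M;\mathbb{Z}/2)$ is generated by the mod~$2$ reduction of $[\omega]$. Hence $w_2(M)=0$ only tells you that $w_2(B)$ lies in $\{0,[\omega]\bmod 2\}$, not that $w_2(B)=0$; the characteristic-class computation by itself does not deliver the ``only if'' direction (compare the Hopf fibration $S^5\to\bC P^2$, where the total space is spin and the base is not). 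The paper avoids $w_2$ altogether: it works with the $\SO(2n)$-reduction $P_\sharp\simeq\pi^*P_h$ of the frame bundle, shows that a spin structure $\wt P_g\to P_g$ restricts to a $\Spin(2n)$-bundle $\wt P_\sharp$ whose quotient by the basic-section equivalence relation is a double cover of $P_h$ (citing Conti--Fino), and conversely pulls back and enlarges the structure group. If you want to keep a cohomological argument you must additionally control the class $[\omega]\bmod 2$, or otherwise work at the level of the frame bundles as the paper does.
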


The remaining part of this section is essentially devoted to the proof
of Theorem~\ref{thm:GKS}. We first show that the existence of a
generalised Killing spinor forces the transverse geometry to be
Ricci-flat and then prove (ii) and the last claim.

Let $M$ be a Sasakian spin manifold. Recall that the ``$\Gamma$-trace''
$\Tr_{\Gamma}(R)$ of the curvature tensor is the $1$-form on
$M$ with values in the endomorphisms bundle of $\bS(M)$ defined
by
\begin{equation*}
  \Tr_{\Gamma}(R)(X)\cdot\varphi:=(\epsilon\xi\cdot
  R(X,\xi)+\sum_{i=1}^{2n}\wh e_i \cdot R(X,\wh e_i))\cdot\varphi~,
\end{equation*}
for all $X\in TM$ and $\varphi\in \bS(M)$, where $(\wh e_i,\xi)$
is an adapted frame on $M$. If $r$ is the $(1,1)$-Ricci curvature
tensor of $g$, standard arguments and part (iii) of
Proposition~\ref{pr:lemma1}  yield the general identity
\begin{equation}
  \label{eq:gtrace}
  \Tr_{\Gamma}(R)(X)\cdot\varphi = - \tfrac{1}{2} r(X)\cdot \varphi =
  \begin{cases}
    -n\epsilon\xi\cdot\varphi&\text{ if } X=\xi~;\\
    -\tfrac{1}{2}\wh{r_h(U)}\cdot\varphi+\epsilon\wh U\cdot\varphi &
    \text{ if } X=\wh U~,
  \end{cases}
\end{equation}
where $r_h$ is the $(1,1)$-Ricci curvature tensor of $h$.

The expression of the curvature tensor in part (ii) of
Proposition~\ref{pr:lemma1} yields also
\begin{equation*}
  R(\wh U,\xi)\cdot\varphi =\tfrac{1}{2}\epsilon\xi\cdot\wh U\cdot\varphi~.
\end{equation*}
Now, if $\varphi$ is generalised Killing
\begin{equation*}
  \begin{split}
    \nabla_{\wh U}\nabla_{\wh V}\varphi&=\nabla_{\wh
      U}(\tfrac{1}{2}\wh{JV}\cdot \xi\cdot\varphi)\\
    &=\tfrac{1}{2}\nabla_{\wh U}\wh{JV}\cdot \xi\cdot\varphi+
    \tfrac{1}{2}\wh{JV}\cdot\nabla_{\wh U}\xi\cdot\varphi
    +\tfrac{1}{2}\wh{JV}\cdot \xi\cdot\nabla_{\wh U}\varphi\\
    &=\tfrac{1}{2}\nabla_{\wh U}\wh{JV}\cdot \xi\cdot\varphi+
    \tfrac{1}{2}\wh{JV}\cdot\nabla_{\wh U}\xi\cdot\varphi +
    \tfrac{1}{4}\epsilon\wh{JV}\cdot\wh{JU}\cdot\varphi\\
    &=\tfrac{1}{2}\wh{D_U JV}\cdot\xi\cdot\varphi
    -\tfrac{1}{2}\epsilon h(U,V)\varphi
    -\tfrac{1}{4}\epsilon\wh{JV}\cdot\wh{JU}\cdot\varphi \\
    &=\tfrac{1}{2}\wh{JD_U V}\cdot\xi\cdot\varphi
    -\tfrac{1}{2}\epsilon h(U,V)\varphi
    -\tfrac{1}{4}\epsilon\wh{JV}\cdot\wh{JU}\cdot\varphi
  \end{split}
\end{equation*}
and, using \eqref{eq:brahor} and property (b) of
Definition~\ref{def:generdef},
\begin{equation*}
  \begin{split}
  R(\wh U,\wh V)\cdot\varphi &= 2\omega(U,V) \Phi\cdot\varphi -
  \tfrac{1}{4} \epsilon(\wh{JV}\cdot\wh{JU}-\wh{JU}\cdot\wh{JV})
  \cdot\varphi\\
  &= 2h(JU,V)\Phi\cdot\varphi - \tfrac{1}{2}\epsilon(h(U,V) +
  \wh{JV}\cdot\wh{JU}) \cdot\varphi~.
  \end{split}
\end{equation*}
The value of the $\Gamma$-trace on a generalised Killing spinor
$\varphi$ and an horizontal lift $\wh U$ is therefore equal to
\begin{equation*}
  \begin{split}
    \Tr_{\Gamma}(R)(\wh U)\cdot\varphi &= (\epsilon\xi\cdot R(\wh
    U,\xi)+\sum_{i=1}^{2n}\wh e_i \cdot R(\wh U,\wh e_i))\cdot\varphi\\
    &=-\tfrac{1}{2}\epsilon\wh U\cdot\varphi + 2\sum_{i=1}^{2n}\wh e_i
    \cdot h(JU,e_i)\Phi\cdot\varphi\\
    &\qquad -\tfrac{1}{2}\sum_{i=1}^{2n}\wh e_i
    \cdot \epsilon(h(U,e_i)+\wh{Je_i}\cdot\wh{JU})\cdot\varphi\\
    &=-\tfrac{1}{2}\epsilon\wh U\cdot\varphi+2\wh{JU}\cdot
    \Phi\cdot\varphi - \tfrac{1}{2} \epsilon\wh U\cdot\varphi
    -\sum_{i=1}^{n} \epsilon \wh e_i
    \cdot\wh{Je_i}\cdot\wh{JU}\cdot\varphi\\
    &=-\epsilon\wh U\cdot\varphi + 2\wh{JU}\cdot \Phi\cdot\varphi
    - 2 \Phi \cdot \wh{JU} \cdot \varphi\\ 
    &=-\epsilon\wh U\cdot\varphi + 2\wh{JU}\cdot \Phi\cdot\varphi -
    2\wh{JU}\cdot \Phi\cdot\varphi + 2\epsilon\wh{U}\cdot\varphi\\
    &= \epsilon\wh{U}\cdot\varphi~.
  \end{split}
\end{equation*}
Comparing this with \eqref{eq:gtrace} immediately yields
\begin{equation*}
  \wh{r_h(U)}\cdot\varphi=0
\end{equation*}
for all $U\in\mathfrak{X}(B)$ and $\wh{r_h(U)}$ is a null vector of
the distribution $\cH$.  As $g|_{\cH}$ is positive-definite one gets
$\wh{r_h(U)}=0$ and $\Ric_h=0$.

We now describe the relation between the spin structure of the total
space and the base of the characteristic fibration
$\pi:(M,g)\to (B,h)$. We recall here only the facts that we need and
refer to e.g. \cite[\S 5]{MR2657844} for more details on spin
structures and pseudo-Riemannian submersions.

The orthogonal splitting $TM=\cH\oplus\cV$ of the tangent bundle of
$M$ and the fact that $\cV=\Ker\pi_*$ is trivialised by $\xi$ define
an $\mathrm{SO}(2n)$-reduction $P_\sharp\subset P_g$ of the bundle
$p:P_g\to M$ of oriented $g$-orthonormal frames on $M$, where
\begin{equation*}
  \cH = P_\sharp \times_{\SO(2n)} \bR^{2n} \qquad\text{and}\qquad
  P_\sharp/\SO(2n)\bigr|_{x} \simeq \xi\bigr|_{x}~.
\end{equation*}
If $P_h$ is the bundle of oriented $h$-orthonormal frames on $B$, the
natural map
\begin{equation*}
  d\pi:P_{\sharp}\to P_h~,\qquad d\pi(u)=\pi_*\circ u\bigr|_{\bR^{2n}}~,
\end{equation*} 
is an isomorphism on each fiber and it identifies
$P_{\sharp}\simeq \pi^* P_h$.

We say that a (local) section $\wh s:M\to P_{\sharp}$ is \emph{basic}
if there exists a section $s:B\to P_h$ with $d\pi\circ \wh
s=s\circ\pi$; the basic sections and the sections of $P_h$ are in a
one-to-one correspondence.  Similarly, if $W$ is an
$\mathrm{SO}(2n)$-module, an equivariant map $\wh f:P_{\sharp}\to W$
is \emph{basic} if there exists $f:P_h\to W$ such that $\wh f=f\circ
d\pi$. In this case the sections
\begin{equation}\label{eq:salah}
  \begin{aligned}[m]
    \wh\varphi &= [\wh s,\wh f\circ \wh s]:M\to
    P_\sharp\times_{\SO(2n)}W \qquad\text{and}\\
    \varphi &= [s,f\circ s]:B\to P_h\times_{\SO(2n)}W
  \end{aligned}
\end{equation}
of the associated bundles $P_\sharp\times_{\SO(2n)}W\to M$ and
$P_h\times_{\SO(2n)}W\to B$, respectively, are related by the identity
$d\pi\circ\wh \varphi=\varphi\circ \pi$ and all sections $\wh\varphi$
of $P_\sharp\times_{\SO(2n)}W$ which are basic (i.e., they satisfy
this identity for some $\varphi$) are as in \eqref{eq:salah}.

We also say that two elements $u\in \left.P_\sharp\right|_{x}$ and
$u'\in \left.P_\sharp\right|_{x'}$ are \emph{equivalent}, written
$u\sim u'$, if $\pi(x)=\pi(x')$ and there is a local basic section
$\wh s$ with $\wh s(x)=u$ and $\wh s(x')=u'$. The bundle $P_h$ and its
sections are then naturally identifiable with $P_\sharp/\!\sim$ and,
respectively, the basic sections of $P_\sharp$.

Let now $\Ad:\wt P_g\rightarrow P_g$ be a spin structure on $M$, a
double cover of $P_g$ with structure group $\wt G$ isomorphic to
$\Spin(2n+1)$ if $\epsilon=1$ or to $\Spin(2n,1)$ if $\epsilon=-1$,
inducing the canonical covering morphism on each fiber. We consider
also the principal bundle $\wt P_\sharp:=\Ad^{-1}(P_\sharp)$ on $M$
with fiber $\Spin(2n)=\Ad^{-1}(\SO(2n))$. In complete analogy with the
bundles of linear frames, two elements $u\left.\in \wt P_\sharp\right|_x$ and
$u'\left.\in \wt P_\sharp\right|_{x'}$ are called equivalent if
$\pi(x)=\pi(x')$ and there is a local section
$\wh s:M\to \wt P_\sharp$ with $\wh s(x)=u$, $\wh s(x')=u'$ and which
is basic, in the sense that $\Ad(\wh s):M\to P_\sharp$ is basic. By
\cite[Lemma 5]{MR2657844}
\begin{equation*}
  \Ad:\wt P_h:=\wt P_\sharp/\!\sim~\longrightarrow P_h\simeq
  P_\sharp/\!\sim
\end{equation*} 
is a two-sheeted covering and therefore a spin structure on $B$ (the
proof of the lemma is given just in the Riemannian case but it extends
verbatim to the Lorentzian case too).

Conversely if $B$ is endowed with a spin structure
$\Ad:\wt P_h\to P_h$, the pull-back bundle
$\wt P_\sharp:=\pi^*\wt P_h$ is a double cover of $P_\sharp$ and
enlarging its structure group $\Spin(2n)$ to $\wt G$ yields the spin
structure $\wt P_g=\wt P_\sharp\times_{\Spin(2n)}\wt G$ on $M$.  This
argument shows that $M$ admits a spin structure \emph{if and only if}
$B$ does.

Let $\bS$ be an irreducible module for the complex Clifford
algebra $\mathbb{C}l(2n+1)$ so that the Dirac spinor fields on $M$ are
given by the sections of the bundle
$\bS(M)=\wt P_g\times_{\wt G}\bS\simeq \wt
P_\sharp\times_{\Spin(2n)}\bS$.
As $\bS$ is irreducible also for $\mathbb{C}l(2n)$ (see the
classification of Clifford algebras in e.g. \cite{LM}), the
\emph{basic sections} $\wh\varphi : M\to \wt P_\sharp
\times_{\Spin(2n)} \bS$ of the very same bundle are in a
one-to-one correspondence with the spinor fields on $B$ and form a
natural subclass of the spinors on $M$. Similarly a section of the
bundle of Clifford algebras on $B$ is represented by a basic section
of $\wt P_{\sharp} \times_{\Spin(2n)} \mathbb{C}l(2n+1)\to M$ with
values in $\mathbb{C}l(2n)$.

Our aim is to determine the covariant derivatives $D\varphi$ and
$\nabla\wh\varphi$ of a spinor $\varphi$ on $B$ and the corresponding
(basic) spinor $\wh\varphi$ on $M$. Let
$\wh\vartheta:TP_g\rightarrow\bR^{2n+1}$ be the so-called soldering
form of $P_g$, defined by $\wh\vartheta_u(v)=(v^1,\ldots,v^{2n+1})$
where the $v^i$ are the components of $p_*(v)\in T_{p(u)}M$ with
respect to the frame $u$. The restriction of the soldering form to
$P_\sharp$ decomposes into
\begin{equation*}
  \left.\wh\vartheta\right|_{P_\sharp} =
  (\vartheta_\cV,\vartheta_\cH)~,\qquad
  \vartheta_\cH=(d\pi)^*\vartheta~,
\end{equation*}
where $\vartheta:TP_h\to\bR^{2n}$ is the soldering form of $P_h$. We
call a vector $v\in T_u P_\sharp$ \emph{horizontal}
(resp. \emph{vertical}) if $\vartheta_\cV(v)=0$
(resp. $\vartheta_\cH(v)=0$).

Let also $\wh\omega$ and $\omega$ be the Levi-Civita connection
$1$-forms on $P_g$ and $P_h$, respectively.  One has the decomposition
\begin{equation*}
\left.\wh\omega\right|_{P_\sharp}=\begin{pmatrix} \omega_{\cH} &  A \\
  -\epsilon A^T & 0\end{pmatrix}\quad\text{where}\quad A:TP_\sharp\to \bR^{2n}\quad\text{and}\quad
 \omega_{\cH}:TP_\sharp\to \so(2n)~,
\end{equation*}
and $\omega_{\cH}(v)=(d\pi)^*\omega(v)$ for any horizontal $v\in
TP_\sharp$ (see \cite[Lemma 4]{MR2657844}).

\begin{proposition}
  Let $\varphi$ be a spinor on $B$ and $\wh\varphi$ the corresponding
  (basic) spinor on $M$. Then we have
  \begin{enumerate}[label=(\roman*)]
  \item $\nabla_{\wh U} \wh\varphi =
    (D_{U}\varphi)^{\wh{\phantom{cc}}}\!\!+\tfrac{1}{2}\epsilon\Phi(\wh
    U)\cdot\xi\cdot \wh\varphi$ for all $U\in\mathfrak{X}(B)$;
  \item $\nabla_{\xi}\wh\varphi=-\Phi\cdot\wh\varphi$.
  \end{enumerate}
  If $\varphi$ is parallel then $\Ric_h=0$ and $\wh\varphi$ is a
  generalised Killing spinor.
\end{proposition}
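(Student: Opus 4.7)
The plan is to compute both covariant derivatives directly from the formula expressing the covariant derivative of a section of an associated spinor bundle in terms of the spin lift of the Levi-Civita connection $1$-form $\wh\omega$, and then to read off the result from the block decomposition of $\wh\omega|_{P_\sharp}$ recorded just above the statement. Writing $\wh\varphi=[\wh s,\wh f\circ\wh s]$ for a local basic lift $\wh s:M\to\wt P_\sharp$ and a basic equivariant map $\wh f$, this formula reads
\[
  \nabla_X\wh\varphi = [\wh s,\,X(\wh f\circ\wh s) + \wt{\wh\omega(\wh s_*X)}\cdot(\wh f\circ\wh s)]~,
\]
where $\wt{(\cdot)}$ denotes the spin lift. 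The block form $\wh\omega|_{P_\sharp}=\bigl(\begin{smallmatrix}\omega_\cH & A\\ -\epsilon A^T & 0\end{smallmatrix}\bigr)$ then splits this into a ``base'' part coming from $\omega_\cH$ and an ``off-diagonal'' part coming from $A$; the former lifts to $\wt{\omega_\cH}\in\mathfrak{spin}(2n)$, while the latter lifts, up to sign, to the Clifford element $\tfrac{1}{2}\sum_a A^a\,\wh{e_a}\cdot\xi$. Everything therefore reduces to evaluating these two blocks on $\wh s_*\wh U$ and on $\wh s_*\xi$ in an adapted frame.

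For (i), take $X=\wh U$. Since $\wh s$ is basic, $\wh s_*\wh U$ is $d\pi$-related to $s_*U$; moreover $\omega_\cH(\wh s_*\wh U)=(d\pi)^*\omega(s_*U)$ and $\wh f\circ\wh s$ factors through $\pi$, so the base-part contribution assembles exactly into the basic lift $(D_U\varphi)\widehat{\phantom{x}}$. For the off-diagonal block, the fact that $\wh\omega$ is Levi-Civita together with $\nabla_{\wh U}\xi=-\Phi(\wh U)$ from Proposition~\ref{pr:lemma1}(i) identifies $A^a(\wh s_*\wh U)$ with the components of $\Phi(\wh U)$ in the adapted frame, and the spin lift then produces the Clifford element $\tfrac{1}{2}\epsilon\,\Phi(\wh U)\cdot\xi$, yielding the second term in (i).

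For (ii), take $X=\xi$. Since $\wh f\circ\wh s$ factors as $(f\circ s)\circ\pi$ and $\pi_*\xi=0$, the directional derivative $\xi(\wh f\circ\wh s)$ vanishes. The off-diagonal block $A(\wh s_*\xi)$ also vanishes because its entries are proportional to $g(\Phi(\xi),\wh{e_a})=0$. Only the $\omega_\cH$-part survives, and $\nabla_\xi\wh{e_a}=-\epsilon\wh{Je_a}$ from Proposition~\ref{pr:lemma1}(i) identifies this $\so(2n)$-valued expression with the matrix of $-\Phi|_\cH$ in the adapted frame; its spin lift is precisely the natural image of $-\Phi\in\so(TM)$ in $\mathfrak{spin}$, giving $\nabla_\xi\wh\varphi=-\Phi\cdot\wh\varphi$. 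The final claim is then immediate: if $D\varphi=0$ then (i) becomes condition (a) of Definition~\ref{def:generdef} on horizontal vectors and (ii) is condition (b), so $\wh\varphi$ is a generalised Killing spinor, and $\Ric_h=0$ follows from the $\Gamma$-trace computation carried out earlier in this section.

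The main obstacle will be the sign and normalisation bookkeeping in passing from the off-diagonal block $A$ (encoding $\Phi|_\cH$ via $\nabla\xi=-\Phi$) to its Clifford image, where the signature factor $\epsilon=g(\xi,\xi)$ enters both through the metric on $\cV$ and through the Clifford convention $e_i\cdot e_j+e_j\cdot e_i=-2g_{ij}$; handling both signatures uniformly requires some care, but the computation is otherwise routine.
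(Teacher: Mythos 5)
Your proposal is correct and follows essentially the same route as the paper: both compute $\nabla\wh\varphi$ via the associated-bundle formula for a basic section and then read off the two terms from the block decomposition $\wh\omega|_{P_\sharp}=\bigl(\begin{smallmatrix}\omega_\cH & A\\ -\epsilon A^T & 0\end{smallmatrix}\bigr)$, identifying the $A$-block with $\Phi$ via $\nabla\xi=-\Phi$ and using $\nabla_\xi\xi=0$ to isolate the $\omega_\cH$-contribution in case (ii), with the final claim reduced to Definition~\ref{def:generdef} and the earlier $\Gamma$-trace argument. The only difference is presentational: the paper delegates the spin-lift and sign bookkeeping to the cited lemmas of Conti--Fino, whereas you flag it as the step requiring care.
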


\begin{proof}
  By definitions $\wh\varphi=[\wh
  s,\pi^*\Psi]$ for some basic section $\wh s:M\to \wt
  P_\sharp$ and and a map $\Psi:B\to \mathbb
  S$. By the second part of \cite[Lemma 5]{MR2657844} and
  \cite[Prop.~3]{MR2657844} one has for all basic lifts $\wh U$ on $M$
  \begin{equation*}
    \begin{split}
    \nabla_{\wh U}\wh\varphi=\nabla_{\wh U}[\wh s,\pi^*\Psi] &= [\wh
    s,(\wh s^*\wh\omega)(\wh U)\cdot\pi^*\Psi+\pi^*d\Psi(\wh U)]\\
    &=(D_{U}\varphi)^{\wh{\phantom{cc}}}\!\!-\tfrac{1}{2}\epsilon\wh
    s^* A(\wh U)\cdot\xi\cdot [\wh s,\pi^*\Psi]\\
    &=(D_{U}\varphi)^{\wh{\phantom{cc}}}\!\!+\tfrac{1}{2}\epsilon\Phi(\wh
    U)\cdot\xi\cdot \wh\varphi~.
    \end{split}
  \end{equation*}
  A similar computation for the Reeb vector field yields
  \begin{equation*}
    \begin{split}
      \nabla_\xi\wh\varphi = \nabla_\xi[\wh s,\pi^*\Psi] &= [\wh s,(\wh
      s^*\wh\omega)(\xi)\cdot\pi^*\Psi+\pi^*d\Psi(\xi)]\\
      &=[\wh s,(\wh
      s^*\wh\omega)(\xi)\cdot\pi^*\Psi]\,\!\overset{(\nabla_\xi\xi=0)}=[\wh
      s,(\wh s^*\omega_{\cH})(\xi)\cdot\pi^*\Psi]\\
      &=-\Phi\cdot\wh\varphi~.
    \end{split}
  \end{equation*}
  This proves (i) and (ii). The last two claim follow from a standard
  ``$\Gamma$-trace" computation  and these identities.  The proof is
  completed.
\end{proof}

We showed (i) and (ii) of Theorem~\ref{thm:GKS}. To prove the last claim
we first need to recall the explicit description of the Dirac spin
module.  Let $W=\bR^{2n}$ be the standard Euclidean space with
orthonormal basis $(e_i)_{i=1}^{2n}$ and set $V=W\oplus\bR\xi$ with
$g(\xi,\xi)=\epsilon$.  The complexification $W^{\bC}=W\otimes\bC$ of
$W$ decomposes as a direct sum of isotropic subspaces $W^{\bC} =
W^{10}\oplus W^{01}$, where
\begin{equation*}
  W^{10}=\left<e_i^{10}=\tfrac{1}{2}(e_i-ie_{i+n})\middle | 
    i=1,\dots,n\right> \qquad\text{and}\qquad
  W^{01}=\overline{W^{10}}~.
\end{equation*}
Let $U = W^{10}$ and set $\bS=\Lambda^{\bullet} U^*$.  For any
$v,v'\in U$, $w,w'\in\overline{U}$ and $\varphi\in\bS$, the
identities
\begin{align}
  \label{eq:spin1}
  v\cdot \varphi:=-2\imath_{v}\varphi~,\\
  \label{eq:spin2}
  w\cdot \varphi:=w^{\flat}\wedge \varphi~,
\end{align}
satisfy $v\,v'+v'\,v=w\,w'+w'\,w=0$ and $v\,w+w\,v=-2g(v,w)$ and
therefore give an irreducible representation of the Clifford algebra
$\mathbb{C}l(2n) \simeq\bC(2^n)$. This representation splits in the
direct sum $\bS =\bS^+\oplus\bS^-$ of two
irreducible Weyl spinor modules $\bS^+=\Lambda^{\text{even}}U^*$
and $\bS^-=\Lambda^{\text{odd}}U^*$ for the even part of
$\mathbb{C}l(2n)$; they can also be intrinsically described as the
$\pm 1$-eigenspaces of the volume element
$\vol_{2n}:=(-1)^{\frac{n(n+1)}{2}}i^{n}e_1\cdots e_{2n}$.

To obtain an irreducible representation of $\mathbb{C}l(2n+1)
\simeq
\bC(2^n)\oplus\bC(2^n)$, it is sufficient to complement \eqref{eq:spin1}
and \eqref{eq:spin2} with
\begin{equation}
\label{eq:spin3}
\xi\cdot \varphi:=i^{\frac{\epsilon+1}{2}}\vol_{2n}\cdot \varphi~.
\end{equation}

Now, if one fixes an adapted local frame at $x\in M$, the action of
$\Phi\in\so(V)$ on a spinor $\varphi\in\Lambda^{p}U^*$ is given by
\begin{equation}
  \label{eq:omegaspinor}
  \begin{split}
    \tfrac{1}{2}\epsilon\Phi\cdot
    \varphi&=-i\sum_{i=1}^{n}(e_i^{10}\wedge e_i^{01})\cdot
    \varphi=-\tfrac{i}{4}\sum_{i=1}^{n}[e_i^{10},e_i^{01}]\cdot \varphi\\
    &=\tfrac{i}{2}\sum_{i=1}^{n}\imath_{e_i^{10}}((e_i^{01})^\flat\wedge \varphi)
    -\tfrac{i}{2}\sum_{i=1}^{n}(e_i^{01})^\flat \wedge\imath_{e_i^{10}}\varphi\\
    &=\tfrac{n}{4}i \varphi
    -i\sum_{i=1}^{n}(e_i^{01})^\flat\wedge\imath_{e_i^{10}} \varphi = \tfrac{n}{4}i \varphi
    -\tfrac{p}{2}i \varphi\\
    &= \tfrac{n-2p}{4}i \varphi~.
\end{split}
\end{equation}
If $B$ is simply-connected with holonomy $\SU(n)$ then it is
spin and it is endowed with two parallel spinors $\varphi_\pm$ such
that $\varphi_+\in\Lambda^0 U^*$ and $\varphi_-\in\Lambda^n U^*$ at
any point (see \cite[pag.~61]{Wang}). In the non simply-connected case
or if $B$ is the standard complex torus the same is true, provided an
appropriate spin structure is chosen (see \cite{MR1327109,MR1751897}
for the first case and consider the products of the periodic ``Ramond"
spin structures of the circle $S^1$ in the second case).

These observations and \eqref{eq:omegaspinor} imply at once the last
part of Theorem~\ref{thm:GKS}. We collect here, for later use in
Section 4, the equations that are satisfied by the two generalised
Killing spinors $\varphi_\pm$:
\begin{equation}
  \label{eq:attheend}
  \nabla_{X}\varphi_\pm = \tfrac{1}{2} \epsilon
  \Phi(X)\cdot\xi\cdot\varphi_\pm \quad
  (X\in\cH)~,\qquad\nabla_{\xi}\varphi_\pm=\mp\epsilon
  i\tfrac{n}{2}\varphi_\pm~.
\end{equation}

\section{Applications to supersymmetric theories of gravity}
\label{sec:EEME}

In Section 3 we showed that the integrability conditions for the
existence of a generalized Killing spinor as in Definition
\ref{def:generdef} correspond to the transverse geometry of the
Sasakian manifold being Ricci-flat. In this section we will see that
these conditions are tightly related to a generalized Einstein
equation on Sasakian manifolds and, in particular, we will investigate
the existence of M-theory backgrounds on Lorentzian Sasakian manifolds
which are (possibly nontrivial) bundles over Calabi--Yau 5-folds.

\subsection{A generalised Einstein equation on Sasakian manifolds}
\label{sec:gener-einst-equat}

We recall that a bosonic background of eleven-dimensional supergravity
is given by an eleven-dimensional Lorentzian manifold $(M,g)$ endowed with
a closed four form $F\in\Lambda^4M$ subject to two partial
differential equations (see \cite{CJS}): the \emph{Einstein equation}
\begin{equation}
  \label{eq:EE}
  \Ric(X,Y) = \tfrac{1}{2} g(\imath_X F,\imath_Y F) -
  \tfrac{1}{6}g(X,Y)g(F,F)~,
\end{equation}
and the \emph{Maxwell equation}
\begin{equation}
  \label{eq:ME}
  d\star F = -\tfrac{1}{2}F\wedge F~.
\end{equation}
One is usually interested in \emph{supersymmetric} backgrounds, that
is, backgrounds admitting a spin structure and a non-zero spinor
$\varphi\in\Gamma(\bS(M))$ which is pseudo-Majorana and
satisfies \footnote{The perhaps unusual form of this equation  is due
  to our conventions on Clifford algebras and our metric conventions
  being ``mostly plus''.}
\begin{equation*}
  \nabla_X\varphi + i (\tfrac{1}{6}\imath_X F +
  \tfrac{1}{12}X^\flat\wedge F) \cdot\varphi=0~,
\end{equation*}
for all $X\in TM$.  We recall here that $\varphi$ is
\emph{pseudo-Majorana} if it satisfies the reality condition
$j(\varphi)=\varphi$ where $j:\bS(M)\to \bS(M)$ is an
appropriate antilinear involution on the space of Dirac spinors (see,
e.g., \cite{MajoranaSpinors}).  This map can be conveniently described fixing an
adapted local frame at $x\in M$ and identifying each fiber
$\left.\bS(M)\right|_x$ with our model
\eqref{eq:spin1}--\eqref{eq:spin3} of the Dirac spin module
$\bS=\Lambda^\bullet U^*$. Consider first the ``Hodge star operator"
$\star:\bS\to \bS$ given by
\begin{equation*}
  \star(\Lambda^p U^*)\subset \Lambda^{5-p}
  U^*\qquad\text{and}\qquad\varphi\wedge\star
  \varphi'= g(\varphi,\overline\varphi')\varphi_-~,
\end{equation*}
where $\varphi\to\overline\varphi\in\Lambda^\bullet \overline U^*$ is
the standard conjugation, $\varphi,\varphi'\in\bS$ and
$\varphi_-\in\Lambda^5 U^*$ is normalised so that
$g(\varphi_-,\overline\varphi_-)=1$. One can check that $\star$ is an
antilinear involution which is not $\Spin(10,1)$-equivariant as it
satisfies
\begin{equation*}
  \begin{split}
    \star (v\cdot\varphi)&=2(-1)^{|\varphi|}\overline v\cdot\star\varphi~,\\
    \star (w\cdot\varphi)&=\tfrac{1}{2}(-1)^{|\varphi|+1}\,\overline w\cdot\star\varphi~,\\
    \star (\xi\cdot\varphi)&=-\xi\cdot\star\varphi~,
  \end{split}
\end{equation*}
for all $v\in\ U$ and $w\in\overline U$. However, if one sets
\begin{equation*}
  \left.j\right|_{\Lambda^p U^*} :=
  \tfrac{(-1)^{\frac{p(p-1)}{2}}2^{p}}{\sqrt{32}}\left.\star\right|_{\Lambda^p U^*}~,
\end{equation*}
then $j:\bS\to \bS$ is also antilinear, $j^2=\Id$ and
\begin{equation*}
  \begin{split}
    j (v\cdot\varphi)&=-\overline v\cdot j\varphi~,\\
    j (w\cdot\varphi)&=-\overline w\cdot j\varphi~,\\
    j (\xi\cdot\varphi)&=-\xi\cdot j\varphi~.
  \end{split}
\end{equation*}
It follows that this map is $\Spin(10,1)$-equivariant and it induces
the required pseudo-Majorana conjugation
$j:\bS(M)\to \bS(M)$. We remark for later use that
$j(\Lambda^0 U^*)\subset \Lambda^5 U^*$ at any point $x\in M$.

The equations \eqref{eq:ME} receive higher order corrections in
M-theory. Before turning to them, we first focus on \eqref{eq:EE}, a
modification of the classical Einstein equations which makes sense on
any Sasakian manifold of dimension $2n+1$, $\epsilon=\pm 1$. As usual
we denote the associated characteristic fibration by
\begin{equation*}
  \pi:(M,g,\xi,\Phi,\eta)\to (B,h,J,\omega)~,
\end{equation*}
and our ansatz on the flux is
\begin{equation}
  \label{eq:ansatz}
  F=\lambda\pi^*\omega^2~,
\end{equation}
where $\lambda$ is some real constant. This form is exact, and hence
closed, by (i) of Theorem~\ref{thm:bundle00}.

\begin{theorem}\label{thm:thm2}
A Sasakian manifold is a solution of the Einstein equations with
$F=\lambda\pi^*\omega^2$ if and only if
$\lambda^2=-\frac{6\epsilon}{n-1}$ and $\Ric_h=2\epsilon(n-5)h$.  If
this is the case the metric $g$ is necessarily Lorentzian and the flux
is non-zero.
\end{theorem}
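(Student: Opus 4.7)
The approach is to evaluate both sides of \eqref{eq:EE} on the ansatz $F=\lambda\pi^*\omega^2$ in an adapted orthonormal frame $(\wh e_i,\xi)$ with $e_{i+n}=Je_i$, and then to match the result against the Sasakian Ricci tensor from Proposition~\ref{pr:lemma1}(iii), which itself decomposes into a metric term and an $\eta\otimes\eta$ term. Because both sides are naturally $\eta$-Einstein-type expressions, the two asserted constraints should fall out of matching coefficients.

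In such a frame, $\pi^*\omega=\sum_{i=1}^n\wh e_i^\flat\wedge\wh e_{i+n}^\flat$, so that $F$ is a sum of $\binom{n}{2}$ orthonormal $4$-forms with coefficient $2\lambda$, giving $g(F,F)=2\lambda^2 n(n-1)$. Since $F$ is horizontal, $\imath_\xi F=0$, and an explicit computation of the $3$-form $\imath_{\wh e_k}F$ shows that its squared norm is $4\lambda^2(n-1)$ while $g(\imath_{\wh e_k}F,\imath_{\wh e_l}F)=0$ for $k\neq l$. Combining this with $g|_{\cH}(X,Y)=g(X,Y)-\epsilon\eta(X)\eta(Y)$ produces the key identity
\[
g(\imath_X F,\imath_Y F)=4\lambda^2(n-1)\bigl[g(X,Y)-\epsilon\eta(X)\eta(Y)\bigr].
\]

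Substituting into \eqref{eq:EE} and simplifying will yield the manifestly $\eta$-Einstein form
\[
\Ric(X,Y)=\tfrac{\lambda^2(n-1)(6-n)}{3}\,g(X,Y)-2\epsilon\lambda^2(n-1)\eta(X)\eta(Y),
\]
and a coefficient-by-coefficient comparison with the Sasakian $\Ric$ from Proposition~\ref{pr:lemma1}(iii) then gives both conditions simultaneously. The $(\xi,\xi)$-component reads $2n=-\tfrac{1}{3}\epsilon\lambda^2 n(n-1)$, forcing $\lambda^2=-\tfrac{6\epsilon}{n-1}$ (in particular $\lambda\neq 0$, which is needed anyway since $\Ric(\xi,\xi)\neq 0$). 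Plugging this value into the horizontal-horizontal component yields $\Ric_h=2\epsilon(n-5)h$, while the mixed component $\Ric(\xi,\wh U)=0$ is automatic. The converse follows by reversing the substitutions. Finally, $\lambda^2>0$ together with $\lambda^2=-\tfrac{6\epsilon}{n-1}$ forces $\epsilon=-1$, so $g$ is necessarily Lorentzian and $F$ is nonzero. The main technical step is the identity for $g(\imath_X F,\imath_Y F)$; the rest is linear algebra once Proposition~\ref{pr:lemma1} is in hand.
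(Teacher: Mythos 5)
Your proposal is correct and follows essentially the same route as the paper: fix an adapted frame, compute $g(F,F)=2n(n-1)\lambda^2$ and $g(\imath_{\wh U}F,\imath_{\wh U}F)=4(n-1)\lambda^2 h(U,U)$, split \eqref{eq:EE} along $TM=\cH\oplus\cV$, and match against \eqref{eq:wtricci}; your packaging of the right-hand side as a single $\eta$-Einstein tensor before comparing coefficients is only a cosmetic variant of the paper's component-by-component treatment. All the coefficients check out, including the substitution of $\lambda^2=-\tfrac{6\epsilon}{n-1}$ into the horizontal component to get $\Ric_h=2\epsilon(n-5)h$ and the signature argument at the end.
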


\begin{proof}
  According to the orthogonal decomposition $TM=\cH\oplus\cV$, the
  equation \eqref{eq:EE} splits into three components. The one with
  $X=\xi$ and $Y$ horizontal is automatically satisfied, since
  $\Ric(\xi,\cH)=0$ (see \eqref{eq:wtricci}) and $\imath_{\xi} F=0$.

  Fix now an $h$-orthonormal frame $(e_i)_{i=1}^{2n}$ on $B$ which
  satisfies $e_{i+n}=Je_i$ for all $1\leq i\leq n$ and let
  $(e^i)_{i=1}^{2n}$ be the corresponding $h$-dual frame so that
  \begin{equation*}
    \omega=\sum_{i=1}^n e^i\wedge e^{i+n}~.
  \end{equation*}
  One has
  \begin{equation*}
    \begin{split}
      g(F,F) &= \lambda^2 h(\omega^2,\omega^2)\\
      &= \lambda^2 h(\sum_{i,j=1}^{n}e^i\wedge e^{i+n}\wedge e^j\wedge
      e^{j+n}, \sum_{i,j=1}^{n}e^i\wedge e^{i+n}\wedge e^j\wedge
      e^{j+n})\\
      &= 2n(n-1)\lambda^2
    \end{split}
\end{equation*}
and, by a similar computation, $g(\imath_{\wh U} F,\imath_{\wh U} F)=
4(n-1)\lambda^2 h(U,U)$
for any horizontal lift $\wh U$. This and the last equation of
\eqref{eq:wtricci} yield that the $\cH$-component of the Einstein
equations is satisfied if and only if
\begin{equation}
\label{eq:above}
\Ric_h=(\tfrac{1}{3}(6-n)(n-1)\lambda^2+2\epsilon)\,h~.
\end{equation}

Finally the $\cV$-component holds if and only if 
\begin{equation}
\label{eq:coeff}
\lambda^2=-\frac{6\epsilon}{n-1}
\end{equation} 
as $\Ric(\xi,\xi)=2n$ and
$\tfrac{1}{2}g(\imath_{\xi} F,\imath_{\xi}
F)-\frac{1}{6}g(\xi,\xi)g(F,F)=-\frac{1}{3}\epsilon n(n-1)\lambda^2$.
This gives the last two claims of the theorem and by substituting
\eqref{eq:coeff} in \eqref{eq:above} also $\Ric_h=2\epsilon(n-5)h$.
\end{proof}

We note that the base of the characteristic fibration of a solution of
the Einstein equations is Kähler-Einstein and it is Ricci-flat
precisely in the $11$-dimensional case. It is this fact that will
ultimately allow us to use Theorem~\ref{thm:GKS} in the case $n=5$,
$\epsilon=-1$ and to discuss the existence of a particular kind of
pseudo-Majorana spinors; before doing so we have a look to the
equations \eqref{eq:ME}.

\emph{From now on and in the rest of the paper we restrict to the
  $11$-dimensional Lorentzian case}.

First of all \begin{equation*}
  -\tfrac{1}{2}F\wedge F=-\tfrac{1}{2}\lambda^2\pi^*\omega^4~.
\end{equation*}
To compute the l.h.s. of \eqref{eq:ME}, we need two relations:
\begin{enumerate}[label=(\alph*)]
\item the volume of $g$ is $\vol=-\eta\wedge\pi^*\vol_h$, where
  $\vol_h$ is the volume of $h$, and
\item for any $p$-form $\beta$ on $B$ one has
  $\star(\pi^*\beta)=-(-1)^{p}\eta\wedge\pi^*(\star\beta)$.
\end{enumerate}
To see these fix an adapted frame $(\wh e_i,\xi)$ and the associated
$g$-dual frame $(\wh e\,{}^i,-\eta)$. Then (a) is a consequence of the
identities $\vol_h=e^1\wedge\cdots\wedge e^{2n}$,
$\vol=-\eta\wedge\wh e\,{}^1\wedge\cdots\wedge \wh e\,{}^{10}$ and
$\wh e\,{}^i=\pi^*e^i$.  Now
$\wh\alpha\wedge\star(\pi^*\beta)=g(\wh\alpha,\pi^*\beta)\vol$ for any
$p$-form $\wh\alpha$ on $M$ and
\begin{equation*}
  g(\wh\alpha,\pi^*\beta)=\begin{cases}
    0 & \text{if } \wh\alpha=\eta\wedge\pi^*\alpha \text{ for some } \alpha\in\Lambda^{p-1}B~;\\
    h(\alpha,\beta)& \text{if } \wh\alpha=\pi^*\alpha \text{ for some } \alpha\in\Lambda^{p}B~.
  \end{cases}
\end{equation*}
Point (b) follows then from the case $\wh\alpha=\pi^*\alpha$ and
\begin{equation*}
  \begin{split}
    \wh\alpha\wedge\star(\pi^\star\beta)&=h(\alpha,\beta)\vol=-\eta\wedge h(\alpha,\beta)\pi^*\vol_h\\
    &=-\eta\wedge\wh\alpha\wedge\pi^*(\star\beta)=\wh\alpha\wedge(-(-1)^{p}\eta\wedge\pi^*(\star\beta))~.
  \end{split}
\end{equation*}
Using these two relations, one gets
\begin{equation*}
  \begin{split}
    d\star F&=\lambda d\star(\pi^*\omega^2)=-\lambda
    d(\eta\wedge\pi^*(\star\omega^2))=-\tfrac{1}{3}\lambda
    d(\eta\wedge\pi^*\omega^3)\\
    &=-\tfrac{1}{3}\lambda d\eta\wedge\pi^*\omega^3\\
    &=\tfrac{2}{3}\lambda\pi^*\omega^4~.
\end{split}
\end{equation*}
On the other hand $\lambda^2=\frac{3}{2}$ if the Einstein equations hold,
implying that the Maxwell equations are not satisfied, at least with
the right coefficients. It seems that this issue cannot be fixed in
any straightforward way; we will however shortly see that one has
interesting consequences on M-theory version of the supergravity
equations.

\subsection{M-theory on Calabi-Yau $5$-folds} 

The first and second \emph{Pontryagin forms} of $(M,g)$ are the forms
$p_1\in\Lambda^4M$ and $p_2\in\Lambda^8M$ given by
\begin{equation*}
  p_1=-\frac{1}{8\pi^2}\Tr R^2~,\qquad p_2=\frac{1}{128\pi^4}((\Tr R^2)^2-2\Tr R^4)~,
\end{equation*}
where, for any positive integer $k$, the \emph{trace forms} are
\begin{multline*}
  \Tr R^{2k} (X_1,\ldots,X_{4k})=\\
  \frac{1}{4k!}\sum_{\sigma}\epsilon(\sigma)\Tr(R(X_{\sigma(1)},X_{\sigma(2)})\circ\cdots\circ
  R(X_{\sigma(4k-1)},X_{\sigma(4k)}))~,
\end{multline*}
and the summation is taken over all permutations $\sigma$ of
$\{1,\ldots,4k\}$.  The first-order corrections of the Maxwell
equations that we are interested in are (see \cite{Duff:1995wd} and,
e.g., also \cite{GauPak})
\begin{equation}
  \label{eq:ME2}
  d\star F+\tfrac{1}{2}F\wedge F=-\beta p(M,g)
\end{equation}
where $\beta$ is a real constant  and $p(M,g)$
is the $8$-form on $M$ given by
\begin{equation*}
  \begin{split}
    p(M,g) &= 64\pi^4(p_1^2-4p_2)\\
    &= 4\Tr R^4-(\Tr R^2)^2~.
  \end{split}
\end{equation*}
We remark that $\beta$ is a dimensionful \emph{nonnegative} constant,
proportional to the sixth power of the eleven-dimensional Planck
length; after a unit of length had been fixed once and for all, it is
entirely natural to look for backgrounds which satisfy \eqref{eq:ME2}
for a definite value of this ``parameter''. We will see that this is
indeed the case and that, at least for the class of backgrounds
considered in this paper, this value is automatically dictated.

To state the main Theorem~\ref{thm:lastthm} of this section, we need
some preliminary notions and results. The \emph{Ricci form}
$\rho_1(U_1,U_2)=\Ric_h(JU_1,U_2)$ of a Kähler manifold
$(B,h,J,\omega)$ is related to the trace of its curvature by (see,
e.g., \cite[Vol.II]{KobayashiNomizu}):
\begin{equation*}
  \rho_1(U_1,U_2)=\tfrac{1}{2}\Tr(J\circ R_h(U_1,U_2))\,;
\end{equation*} 
we similarly define  the \emph{second Ricci form} $\rho_2\in\Lambda^6B$ by
\begin{multline*}
  \rho_2(U_1,\ldots,U_{6})=\\
  \tfrac{1}{6!} \sum_{\sigma} \epsilon(\sigma) \Tr(J\circ
  R_h(U_{\sigma(1)},U_{\sigma(2)}) \circ
  R_h(U_{\sigma(3)},U_{\sigma(4)})\circ
  R_h(U_{\sigma(5)},U_{\sigma(6)}))~,
\end{multline*}
where the summation is over all permutations $\sigma$ of
$\{1,\ldots,6\}$.

\begin{definition}
A $10$-dimensional Kähler manifold is called \emph{admissible} if
$\rho_1\in\Lambda^2 B$, 
$\Tr R_h^2\in\Lambda^4 B$, $\rho_2\in\Lambda^6
B$ and $\Tr R_h^4\in\Lambda^8 B$ are all zero.
\end{definition}

We note that any admissible Kähler manifold is in particular Ricci-flat. 
The bridge of Sasakian manifolds with M-theory is provided
by the following.

\begin{proposition}\label{pr:propon}
  Let $M$ be a
  Lorentzian Sasakian manifold with a Kähler base $B$ which is admissible. Then
  $p(M,g)=-\frac{6688}{105}\pi^*\omega^4$.
\end{proposition}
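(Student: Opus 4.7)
The plan is to compute $p(M,g)=4\Tr R^4-(\Tr R^2)^2$ directly, using the explicit description of the Riemannian curvature of $M$ furnished by Proposition~\ref{pr:lemma1}(ii), and then to invoke the admissibility of $B$ to kill most of the resulting terms. First, at a point $x\in M$ fix an adapted frame $(\wh e_i,\xi)$ and view each $R(X,Y)$ as an element of $\so(T_xM)$, block-decomposed with respect to $T_xM=\cH\oplus\cV$. From Proposition~\ref{pr:lemma1}(ii) the operator $R(\wh U,\wh V)$ preserves $\cH$ and annihilates $\xi$; on $\cH$ it equals $\pi^{*}R_h(U,V)$ plus a ``Sasakian correction'' $S_{U,V}$ polynomial in $\omega$ and $J$. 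In contrast $R(\xi,\wh V)$ is off-diagonal, swapping $\cH$ and $\cV$ via $\xi\mapsto -\wh V$ and $\wh W\mapsto \epsilon h(V,W)\xi$. I would then expand each trace form $\Tr R^{2k}\in\Lambda^{4k}M$ into its horizontal and mixed parts $\pi^{*}\alpha_0+\eta\wedge\pi^{*}\alpha_1$, computing both by plugging horizontal lifts and $\xi$ into the antisymmetrised trace polynomial.

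For $\Tr R^2$ the expansion has two types of contributions: a ``basic'' term proportional to $\pi^{*}\Tr R_h^2$ (killed by admissibility), and a correction coming from $S_{U,V}$ that reduces, via the identities for $\omega$ and $J$ in Section~2, to a multiple of $\pi^{*}\omega^2$. For $\Tr R^4$ the product of four curvature operators produces many monomials; by the block-parity observation above, any monomial contributing to the trace must have an even number of off-diagonal factors $R(\xi,\cdot)$. I would sort the monomials by this number and by their degree in $R_h$ versus $S_{\cdot,\cdot}$: the purely basic monomials collapse, modulo admissibility, into multiples of $\pi^{*}\Tr R_h^4$ and $\pi^{*}(\Tr R_h^2)^2$, both of which vanish; the cubic-in-$R_h$ pieces assemble into contractions with the second Ricci form $\rho_2$, also vanishing; and any linear-in-$R_h$ piece couples to $\rho_1$, which vanishes as well. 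What remains is expressible purely in $\omega$ and $J$ and reduces to a multiple of $\pi^{*}\omega^4$.

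Finally I would form the combination $4\Tr R^4-(\Tr R^2)^2=c\,\pi^{*}\omega^4$ and check that the rational constant equals $-\tfrac{6688}{105}$. This requires keeping careful track of (a) the explicit form of the correction $S_{U,V}$, in particular the factors $2\epsilon$ and $\epsilon$ appearing in Proposition~\ref{pr:lemma1}(ii); (b) the sign $\epsilon=-1$ and the dimensions $n=5$, $2n=10$; (c) the antisymmetrisation factor $\tfrac{1}{4k!}$ in the definition of $\Tr R^{2k}$; and (d) the combinatorial identities for $\sum_i\wh e\,{}^i\wedge\wh{Je_i}{}$ which assemble various contractions into $\pi^{*}\omega^2$ and $\pi^{*}\omega^4$. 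The main obstacle is not conceptual but combinatorial: the $\Tr R^4$ expansion contains a large number of cross-terms involving $\omega$, $J$, $R_h$, $\rho_1$, $\rho_2$, and separating which vanish by admissibility from which survive and contribute to the $\omega^4$ coefficient is bookkeeping-intensive, with the only real risk being arithmetic slip in reaching the precise rational $-\tfrac{6688}{105}$.
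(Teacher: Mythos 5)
Your plan is essentially the paper's own proof: the authors likewise compute the trace forms from the curvature decomposition of Proposition~\ref{pr:lemma1}(ii), obtaining $\Tr R^2=\pi^*\{\Tr R_h^2-\tfrac{4}{3}\rho_1\wedge\omega-8\omega^2\}$ and the analogous expansion \eqref{eq:P2} of $\Tr R^4$, so that admissibility kills every term except the pure $\omega$-powers and $p(M,g)=4\cdot\tfrac{8}{105}\pi^*\omega^4-(-8)^2\pi^*\omega^4=-\tfrac{6688}{105}\pi^*\omega^4$. Like you, the paper declares the intermediate computation ``long and tedious'' and omits the details, so the only difference is that it records the explicit rational coefficients of \eqref{eq:P1}--\eqref{eq:P2}, which you would still need to produce to nail down the constant.
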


This result is a consequence of the fact that the trace forms of a
general $11$-dimensional Lorentzian Sasakian manifold are
\begin{equation}
  \label{eq:P1}
  \Tr R^2=\pi^*\left\{\Tr R_h^2-\tfrac{4}{3}\rho_1\wedge\omega-8\omega^2\right\}
\end{equation}
and
\begin{multline}
\label{eq:P2}
  \Tr R^4 = \pi^*\left\{\Tr R_h^4-\tfrac{2}{7}\rho_2\wedge\omega
    -\tfrac{2}{35}\Tr R_h^2\wedge\omega^2\right.\\ +
  \tfrac{8}{315}\left.
    \rho_1\wedge\omega^3+\tfrac{8}{105}\omega^4\right\}~.
\end{multline}
These equations are obtained by somewhat long and tedious computations
and a repeated use of the algebraic Bianchi identities.  We omit the
details.

We remark that there does \emph{not} exist any definitive notion of a
\emph{supersymmetric} solution of equations \eqref{eq:EE} and
\eqref{eq:ME2}; more precisely there are no complete results for the
corrections required at order $\beta$ to the covariant derivative
$\nabla^o_X\varphi:=\nabla_X\varphi+i(\frac{1}{6}\imath_X
F+\frac{1}{12}X^\flat\wedge F)\cdot\varphi$.
As a matter of fact the modifications described so far in the
literature had all been obtained by requiring that supersymmetry is
preserved on some particular classes of backgrounds (see
\cite{MR860366}; see also \cite{LPSTholcorr} and references therein
for a more recent discussion on this circle of ideas). In this regard,
we also have to note that two first order corrections of $\nabla^o$
are usually seen as equivalent if they both act trivially on the same
putative parallel spinor $\varphi$.

The modified spinorial connection
$\nabla^\beta$ is not arbitrary but it has to satisfy two basic properties:
\begin{enumerate}[label=(\roman*)]
\item if $\beta=0$ then $\nabla^\beta=\nabla^o$, and
\item $\nabla^\beta$ depends just on $g$ and $F$ and not on any
  geometric datum specific of the backgrounds considered (e.g.
  $\omega$, $\xi$ or $\eta$ in our case).
\end{enumerate}

The following proposal satisfies these two properties.

\begin{definition}
  We call a solution of \eqref{eq:EE} and \eqref{eq:ME2} 
  \emph{supersymmetric} if it has a spin structure and a non-zero
  pseudo-Majorana spinor parallel with respect to the connection
  \begin{equation*}
    \nabla^\beta_X\varphi := \nabla^o_X\varphi +
    i\beta\left\{\mu_1\imath_{X}p(M,g)+\mu_2X^\flat\wedge p(M,g)
    \right\}\cdot\varphi~,
  \end{equation*}
  where 
  \begin{equation*}
    \mu_1=1-\tfrac{1+489\sqrt 6}{1200} \qquad\text{and}\qquad
    \mu_2=\tfrac{1}{12}\cdot\tfrac{\sqrt{3}-\sqrt{2}}{(3\sqrt{3}+4\sqrt{2})}~.
  \end{equation*}
\end{definition}

Our main result is then the following

\begin{theorem}
  \label{thm:lastthm}
  Any Lorentzian Sasakian manifold $M$ with an admissible base $B$ is
  a solution of \eqref{eq:EE} and \eqref{eq:ME2} with the non-zero flux
  \begin{equation*}
    F=\sqrt{\frac{3}{2}}\pi^*\omega^2~.
  \end{equation*}
  If the base $B$ is admissible with full holonomy $\SU(n)$ or it is
  the standard complex torus with its periodic ``Ramond'' spin
  structure then the solution is supersymmetric.
\end{theorem}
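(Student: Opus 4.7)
The argument has three stages: the Einstein equation, the modified Maxwell equation, and a $\nabla^\beta$-parallel pseudo-Majorana spinor. For the field equations, specialise Theorem~\ref{thm:thm2} to $n=5$, $\epsilon=-1$: it requires $\lambda^2 = -6\epsilon/(n-1) = 3/2$ and $\Ric_h = 2\epsilon(n-5)h = 0$, the latter being automatic for admissible $B$; hence $F = \sqrt{3/2}\,\pi^*\omega^2$ solves \eqref{eq:EE}. For \eqref{eq:ME2}, combine the computation preceding the theorem, $d{\star}F + \tfrac{1}{2}F\wedge F = \bigl(\tfrac{2}{3}\lambda + \tfrac{1}{2}\lambda^2\bigr)\pi^*\omega^4 = \bigl(\tfrac{\sqrt{6}}{3}+\tfrac{3}{4}\bigr)\pi^*\omega^4$, with Proposition~\ref{pr:propon}, $p(M,g) = -\tfrac{6688}{105}\pi^*\omega^4$, to determine $\beta = \tfrac{105}{6688}\bigl(\tfrac{\sqrt{6}}{3}+\tfrac{3}{4}\bigr) > 0$ uniquely; positivity of $\beta$ simultaneously forces $\lambda = +\sqrt{3/2}$.

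Under the stronger hypothesis on $B$, Theorem~\ref{thm:GKS} produces the generalized Killing spinors $\varphi_\pm$ of \eqref{eq:attheend}; in the model \eqref{eq:spin1}--\eqref{eq:spin3} they lie in $\Lambda^0 U^*$ and $\Lambda^5 U^*$, so $\xi\cdot\varphi_\pm = \pm\varphi_\pm$. I claim both are $\nabla^\beta$-parallel. The key observation is that $F$ and $p(M,g)$ are scalar multiples of pullbacks $\pi^*\omega^k$ for even $k$, so in any adapted frame their Clifford elements are $\mathrm{U}(5)$-invariant and, by Schur, act by scalars on each isotypic piece $\Lambda^p U^*$ of $\bS$; explicitly, $(\pi^*\omega^k)^{\mathrm{Cl}}$ acts on $\Lambda^p U^*$ as the scalar $k!\,i^k\,[x^k]\bigl((1-x)^p(1+x)^{n-p}\bigr)$.

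Splitting $\nabla^\beta_X\varphi_\pm = 0$ into vertical and horizontal parts: for $X=\xi$, $\imath_\xi F = \imath_\xi p = 0$ and, via $\xi\cdot\varphi_\pm = \pm\varphi_\pm$, the equation collapses to a single scalar linear relation in $(\beta,\mu_2)$. For $X = \wh U$ horizontal, the pure-type identity $\wh{JU}\cdot\varphi_\pm = \mp i\,\wh U\cdot\varphi_\pm$ (immediate from the definitions on $\Lambda^0$ and $\Lambda^n$) rewrites every term as a scalar multiple of $\wh U\cdot\varphi_\pm \in \Lambda^{1}U^*\oplus\Lambda^{n-1}U^*$, yielding a second linear relation in $(\beta,\mu_1,\mu_2)$. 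Both relations are $\pm$-independent, so the same $(\mu_1,\mu_2)$ makes $\varphi_+$ and $\varphi_-$ simultaneously $\nabla^\beta$-parallel; with $\beta$ fixed from the first paragraph, the two scalar equations determine $(\mu_1,\mu_2)$ uniquely, agreeing with the values in the definition of $\nabla^\beta$.

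Finally, since $F$ and $p$ are real forms of even degree, each correction term in $\nabla^\beta$ is of the form $i$ times a real form of odd degree; the transformation rules for $j$ given before the theorem then yield $j\circ\nabla^\beta_X = \nabla^\beta_X\circ j$. Normalising $\varphi_-$ so that $j\varphi_+ = \varphi_-$, the spinor $\varphi_++\varphi_-$ is nonzero, pseudo-Majorana, and $\nabla^\beta$-parallel, establishing supersymmetry. The principal technical work is the coefficient-matching in stage three; $\mathrm{U}(5)$-equivariance reduces it to finite linear algebra in $(\mu_1,\mu_2)$.
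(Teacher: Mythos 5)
Your proposal is correct and follows essentially the same route as the paper: Theorem~\ref{thm:thm2} for the Einstein equation, the $d\star F$ computation of \S 4.1 together with Proposition~\ref{pr:propon} to fix $\beta$ (and force $\lambda>0$), and then a verification that $\varphi_\pm$ are $\nabla^\beta$-parallel by computing the Clifford action of $\pi^*\omega^k$ and of $\imath_X\pi^*\omega^k$, $X^\flat\wedge\pi^*\omega^k$ on $\Lambda^0U^*\oplus\Lambda^5U^*$, finishing with a $j$-fixed linear combination. Your only departures are cosmetic but welcome: the Schur-lemma/generating-function packaging of the scalar eigenvalues (which reproduces the paper's values $-20$ and $120$) and the explicit $j$-equivariance of $\nabla^\beta$ via the parity of the correction forms, which the paper leaves implicit.
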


\begin{proof}
From $\Ric_h=0$ and Theorem~\ref{thm:thm2} one knows that
\eqref{eq:EE} is satisfied with $\lambda^2=\frac{3}{2}$.  By the
discussion in  \S 4.1 and Proposition~\ref{pr:propon}, equation
\eqref{eq:ME2} holds if and only if $\lambda>0$ and
\begin{equation}
  \label{eq:beta}
  \beta=\tfrac{105}{6688}(\sqrt{\tfrac{2}{3}}+\tfrac{3}{4})~.
\end{equation}
This proves the first part of the theorem, where
$\lambda=\sqrt{\frac{3}{2}}$.

If $B$ is the standard complex torus or has holonomy $\SU(n)$
then Theorem~\ref{thm:GKS} applies and $M$ has two generalised Killing
spinors $\varphi_\pm$ satisfying \eqref{eq:attheend} with $n=5$,
$\epsilon=-1$. We recall that fixing an adapted frame at $x\in M$ as
at the end of \S 1.2 yields appropriate identifications
$\left.TM\right|_{x}\simeq V=W\oplus\bR\xi$ and
$\left.\bS(M)\right|_{x}\simeq \bS=\Lambda^\bullet U^*$ in
such a way that $\varphi_+\in \Lambda^0 U^*$ and
$\varphi_-\in\Lambda^5 U^*$. In particular,
\begin{equation*}
  \begin{split}
    e_i\cdot e_{i+5}\cdot\varphi_\pm &= i(e_i^{10}+e_i^{01})\cdot
    (e_i^{10}-e_i^{01})\cdot\varphi_\pm\\
    &= -i [e_i^{10},e_i^{01}]\cdot\varphi_\pm=\pm i \varphi_{\pm}~,
  \end{split}
\end{equation*}
for any $i=1,\ldots, 5$,
\begin{equation*}
  \pi^*\omega^2\cdot\varphi_\pm = 2 \sum_{1\leq i<j\leq 5}^5 e_i\cdot
  e_{i+5}\cdot e_{j}\cdot e_{j+5}\cdot\varphi=-20\varphi_\pm~,
\end{equation*}
and
\begin{multline*}
  \pi^*\omega^4\cdot\varphi_\pm = \\
  24\!\!\sum_{1\leq i<j<k<l\leq 5}^5 e_i\cdot e_{i+5}\cdot
  e_{j}\cdot e_{j+5}\cdot e_k\cdot e_{k+5}\cdot e_{l}\cdot
  e_{l+5}\cdot\varphi=120\varphi_\pm~.
\end{multline*}

Our first aim is to show $\nabla^\beta_\xi \varphi_\pm=0$, where the
parameter $\beta$ is as in \eqref{eq:beta}.  On the one hand
$\nabla_{\xi}\varphi_\pm=\pm i \frac{5}{2}\varphi_\pm$ and from
\begin{equation*}
  \imath_\xi\pi^*\omega^2\cdot\varphi_\pm=0~,\quad\text{and}\quad
  (\xi^\flat\wedge\pi^*\omega^2)\cdot\varphi_\pm=\xi^\flat\cdot\pi^*\omega^2\cdot\varphi_\pm=\mp
  20\varphi_\pm~,
\end{equation*}
we find that $\nabla^o_\xi\varphi_\pm=\pm 5i(\tfrac{1}{2}-\frac{1}{\sqrt
  6})\varphi_\pm$.  From this fact, Proposition~\ref{pr:propon} and
\begin{equation*}
  \imath_\xi\pi^*\omega^4\cdot\varphi_\pm=0~, \quad\text{and}\quad 
  (\xi^\flat\wedge\pi^*\omega^4)\cdot\varphi_\pm =
  \xi^\flat\cdot\pi^*\omega^4\cdot\varphi_\pm = \pm
  120\varphi_\pm~,
\end{equation*} 
one finally gets $\nabla^\beta_\xi\varphi_\pm=0$.

To prove $\nabla^\beta_X\varphi_\pm=0$ for all horizontal vectors, we
need few additional identities which hold for any spinor
$\varphi\in\Lambda^pU^*$. First a computation similar to
\eqref{eq:omegaspinor} yields
\begin{equation}
  \label{eq:everyspinor}
\pi^*\omega\cdot\varphi=\sum_{i=1}^5 e_i\cdot e_{i+5}\cdot\varphi=(5-2p)i\varphi~.
\end{equation}
Secondly by considering the general relations (see, e.g., \cite{FMPHom})
\begin{equation*}
  (X^\flat\wedge\alpha) \cdot \varphi = X^\flat \cdot \alpha \cdot
  \varphi + \imath_X \alpha\cdot\varphi
\end{equation*}
and
\begin{equation*}
  (X^\flat\wedge\alpha) \cdot \varphi = (-1)^{|\alpha|} \alpha \cdot
  X^\flat \cdot \varphi - \imath_X \alpha \cdot \varphi
\end{equation*}
in the case $\alpha=\pi^*\omega\in\Lambda^2 M$ and
$X\in\mathfrak{X}(M)$ horizontal, one gets
\begin{equation}
  \label{eq:everyspinor2}
X^\flat\cdot\pi^*\omega\cdot\varphi - \pi^*\omega\cdot
X^\flat\cdot\varphi = -2\imath_X\pi^*\omega\cdot\varphi =
2\Phi(X)\cdot\varphi~.
\end{equation}
Using \eqref{eq:everyspinor2} and \eqref{eq:everyspinor} one has for
any horizontal vector
\begin{equation}
  \label{eq:lasteq}
  \begin{split}
    \Phi(X)\cdot\varphi_\pm&=\tfrac{1}{2}(X^\flat\cdot\pi^*\omega-\pi^*\omega\cdot X^\flat)\cdot\varphi_\pm\\
     &=\tfrac{1}{2}i\cdot(\pm 2)X^\flat\cdot\varphi_\pm\\
    &=\pm i X^\flat\cdot\varphi_\pm
  \end{split}
\end{equation}
and since $\varphi_\pm$ is generalised Killing,
\begin{equation*}
  \begin{split}
    \nabla_{X}\varphi_\pm &= -\tfrac{1}{2}\Phi(X)\cdot\xi\cdot \varphi_\pm=\mp\tfrac{1}{2}\Phi(X)\cdot\varphi_\pm\\
    &=-\tfrac{1}{2}i X^\flat\cdot\varphi_\pm~.
  \end{split}
\end{equation*}
A direct computation together with \eqref{eq:lasteq} implies also
\begin{equation*}
  \begin{aligned}[m]
    \imath_X\pi^*\omega^2 \cdot \varphi_\pm &= \mp 8i \Phi(X)\cdot
    \varphi_\pm = 8 X^\flat\cdot\varphi_\pm~,\\
    (X^\flat\wedge\pi^*\omega^2) \cdot \varphi_\pm &=
    X^\flat\cdot\pi^*\omega^2\cdot\varphi_\pm +
    \imath_X\pi^*\omega^2\cdot\varphi_\pm = - 12 X^\flat\cdot\varphi_\pm~,
\end{aligned}
\end{equation*}
and finally $\nabla^o_X\varphi_\pm=i\frac{\sqrt2-\sqrt3}{2\sqrt
  3}X^\flat\cdot\varphi_\pm$.  From this fact, Proposition~\ref{pr:propon} and
\begin{equation*}
  \begin{aligned}[m]
    \imath_X\pi^*\omega^4\cdot\varphi_\pm&=\pm 96 i
    \Phi(X)\cdot\varphi_\pm=-96 X^\flat\cdot\varphi_\pm~,\\
    (X^\flat\wedge\pi^*\omega^4) \cdot \varphi_\pm &=
    X^\flat\cdot\pi^*\omega^4\cdot\varphi_\pm +
    \imath_X\pi^*\omega^4\cdot\varphi_\pm = 24 X^\flat\cdot\varphi_\pm~,
  \end{aligned}
\end{equation*}
one gets $\nabla^\beta_X\varphi_\pm=0$ for all horizontal vectors too. 

We have seen $\nabla^\beta\varphi_\pm=0$. To show that the solution is
supersymmetric one has simply to note that the reality condition
$j(\varphi)=\varphi$ is satisfied for an appropriate linear
combination $\varphi=c_+\varphi_++c_-\varphi_-$ with constant
coefficients (recall the description of the pseudo-Majorana
conjugation given in \S 4.1) and that such combination $\varphi$ is still
$\nabla^\beta$-parallel. The proof is completed.
\end{proof}

We now comment on the class of admissible Kähler manifolds. We note
that it is not empty as it includes all flat Kähler manifolds $B$;
the corresponding Lorentzian Sasakian manifolds given by the total
spaces of the $S^1$-bundles $\pi:M\to B$ provide new non-flat M-theory
backgrounds with a nonzero flux $F$.  For instance in the special case
of the standard complex torus the Kähler form is integral and
therefore $M$ is globally defined and compact: it is, in fact, the
compact quotient of the $11$-dimensional simply connected real
Heisenberg group, see e.g. \cite{MR2416653} for its explicit
description.  By Theorem~\ref{thm:lastthm} this solution is also
supersymmetric.

It is a natural problem to understand whether admissible non-flat
Kähler manifolds do actually exist. Slightly more generally one might
also note that the right hand side of equation \eqref{eq:ME2} is given
by an exact form, due to our ansatz \eqref{eq:ansatz}, and then
consider $10$-dimensional Ricci-flat Kähler manifolds for which
$\Tr R_h^2\in\Lambda^4 B$, $\rho_2\in\Lambda^6 B$ and
$\Tr R_h^4\in\Lambda^8 B$ are all constant multiples of the
appropriate powers of the Kähler form (such manifolds too would
determine solutions of \eqref{eq:EE} and \eqref{eq:ME2}, as it easily
follows from equations \eqref{eq:P1} and \eqref{eq:P2}).

In this regard we stress that $\Tr R_h^2$, $\rho_2$ and $\Tr R_h^4$
are all closed and of type $(p,p)$. It follows from Hodge theory and
Serre duality that the above conditions are satisfied up to exact
terms on compact Calabi-Yau $5$-folds with Hodge numbers
$h^{1,1}=h^{2,2}=1$. By a deep result of \cite{MR2511373} all complete
intersection Calabi-Yau $5$-folds which can be defined in a single
projective space are of this type (see \cite{MR928308} for the
definition and basic properties; see also the list given in
\cite[Table~5]{MR2511373}). To get further insight on these manifolds
seems like an extremely difficult task \cite{MO208537}.

\section*{Acknowledgments}

The first author is supported in part by the grant ST/J000329/1 ``Particle
Theory at the Tait Institute'' from the UK Science and Technology
Facilities Council.  The second author is fully supported by a
Marie-Curie research fellowship of the ``Istituto Nazionale di Alta
Matematica" (Italy).  We would like to thank our respective
funding agencies for their support.


\providecommand{\href}[2]{#2}\begingroup\raggedright\endgroup

\end{document}